\theoremstyle{plain}
\newtheorem{thm}{Theorem}[section]
\newtheorem{clm}[thm]{Claim}
\theoremstyle{definition}
\newtheorem{ex}{Example}
\begin{document}

\title{Supermodular Extension of Vizing's Edge-Coloring Theorem}

\author{Ryuhei MIZUTANI\\
Department of Mathematical Informatics,\\
Graduate School of Information Science and Technology,\\
The University of Tokyo, Tokyo, 113-8656, Japan.\\
\texttt{ryuhei\_mizutani@mist.i.u-tokyo.ac.jp}
}

\maketitle

\begin{abstract}
K\H{o}nig's edge-coloring theorem for bipartite graphs and Vizing's edge-coloring theorem for general graphs are celebrated results in graph theory and combinatorial optimization. Schrijver generalized K\H{o}nig's theorem to a framework defined with a pair of intersecting supermodular functions. The result is called the supermodular coloring theorem.

This paper presents a common generalization of Vizing's theorem and a weaker version of the supermodular coloring theorem. To describe this theorem, we introduce intersecting 2/3-supermodular functions, which are extensions of intersecting supermodular functions. The paper also provides an alternative proof of Gupta's edge-coloring theorem using a special case of this supermodular version of Vizing's theorem.
\end{abstract}

\section{Introduction}
\subsection{Edge-coloring}
Let $G=(V,E)$ be a multigraph. An \textit{edge-coloring} of $G$ is an assignment of colors to all edges in $E$ such that no adjacent edges have the same color. The \textit{chromatic index} $\chi'(G)$ of $G$ is the minimum number $k$ such that there exists an edge-coloring of $G$ using $k$ colors. The \textit{degree} of a vertex $v\in V$ is the number of edges incident to $v$. K\H{o}nig \cite{konig1916} showed the following relation between the chromatic index $\chi'(G)$ and the maximum degree $\Delta(G)$ of a bipartite multigraph $G$. 
\begin{thm}[K\H{o}nig \cite{konig1916}]
\label{konig16}
$\chi'(G)=\Delta(G)$ holds for any bipartite multigraph $G$.
\end{thm}
\noindent It holds that $\chi'(G)\geq \Delta(G)$ for any multigraph $G$
because edges adjacent to the same vertex must have different colors. Theorem \ref{konig16} states that this lower bound $\Delta(G)$ is equal to $\chi'(G)$ for every bipartite multigraph. 

The \textit{multiplicity} $\mu(G)$ is the maximum number of edges between any pair of two vertices in $G$. Vizing \cite{vizing1965} showed the following analogue of Theorem \ref{konig16} for general multigraphs.

\begin{thm}[Vizing \cite{vizing1965}]
\label{vizing65}
$\Delta(G)\leq \chi'(G)\leq \Delta(G)+\mu(G)$ holds for any multigraph $G$.
\end{thm}

For a vertex $v\in V$, let $\delta(v)$ and $\mathrm{deg}(v)$ denote the set of edges incident to $v$ and its cardinality, respectively. For a positive integer $k$, we denote $[k]=\{1,2,\ldots, k\}$. For a color assignment $\pi:U\rightarrow [k]$ of a finite set $U$, we use the notation $\pi(X)=\{\pi(u)\mid u\in X\}$ for a subset $X\subseteq U$. Gupta \cite{gupta1974,gupta1978} generalized each of K\H{o}nig's theorem and Vizing's theorem to a framework including the packing problem of \textit{edge covers} (a set of edges such that every vertex is incident to at least one edge of the set). The following theorem is an extension of K\H{o}nig's theorem by Gupta \cite{gupta1978}:

\begin{thm}[Gupta \cite{gupta1978}]
\label{gupta78bipartite_general}
Let $G=(V,E)$ be a bipartite multigraph. For $k\in \mathbf{Z}_{>0}$, there exists a color assignment $\pi:E\rightarrow [k]$ such that $|\pi(\delta(v))|\geq \min\{\mathrm{deg}(v),k\}$ holds for every $v\in V$.
\end{thm}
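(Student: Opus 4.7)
The plan is to proceed by induction on $k$. The base case $k=1$ is immediate: assigning the single color $1$ to every edge gives $|\pi(\delta(v))|=1$ whenever $v$ is incident to any edge, which equals $\min\{\mathrm{deg}(v),1\}$.

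For the inductive step with $k\ge 2$, my strategy is to extract a single color class $F\subseteq E$ to be labeled $k$, apply the inductive hypothesis to $G-F$ with the palette $[k-1]$, and then glue the two colorings together. The property I want to impose on $F$ is
\begin{equation*}
\lfloor \mathrm{deg}(v)/k\rfloor \;\le\; |F\cap\delta(v)|\;\le\;\lceil \mathrm{deg}(v)/k\rceil \qquad\text{for every } v\in V.
\end{equation*}
Given such an $F$, the verification at a vertex $v$ splits into two cases. When $\mathrm{deg}(v)\ge k$, the lower bound forces $|F\cap\delta(v)|\ge 1$, so color $k$ appears at $v$; and the arithmetic inequality $\mathrm{deg}(v)-\lceil \mathrm{deg}(v)/k\rceil\ge k-1$ (valid for $\mathrm{deg}(v)\ge k\ge 2$) yields $\mathrm{deg}_{G-F}(v)\ge k-1$, so the inductive hypothesis delivers all $k-1$ colors of $[k-1]$ at $v$, for $k$ distinct colors in total. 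When $\mathrm{deg}(v)<k$, the upper bound gives $|F\cap\delta(v)|\le 1$, and the inductive hypothesis delivers $\mathrm{deg}_{G-F}(v)$ distinct colors from $[k-1]$ at $v$, which combine with color $k$ (if it is used at $v$) to give $\mathrm{deg}(v)$ distinct colors.

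The main obstacle will be producing the subset $F$. The uniform fractional vector $x_e=1/k$ for $e\in E$ satisfies the continuous relaxation $\lfloor \mathrm{deg}(v)/k\rfloor\le \sum_{e\ni v}x_e\le\lceil \mathrm{deg}(v)/k\rceil$, $0\le x_e\le 1$; and because $G$ is bipartite, the edge--vertex incidence matrix is totally unimodular, so the feasible polytope has integer vertices and an integral $F$ exists. A more self-contained route would begin with a K\H{o}nig edge-coloring $E=M_1\sqcup\cdots\sqcup M_{\Delta(G)}$ supplied by Theorem~\ref{konig16}, bundle the matchings into $k$ groups as evenly as possible, and then rebalance by alternating-path swaps between groups until each group meets every $\delta(v)$ in either $\lfloor \mathrm{deg}(v)/k\rfloor$ or $\lceil \mathrm{deg}(v)/k\rceil$ edges. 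I expect this rebalancing step to be the technically delicate part of the argument, and it is precisely where bipartiteness is indispensable.
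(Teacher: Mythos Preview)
The paper does not supply its own proof of Theorem~\ref{gupta78bipartite_general}; it is quoted as a result of Gupta, and the only indication of a proof route is the remark that it is a special case of Schrijver's supermodular coloring theorem (Theorem~\ref{thm:supermo}), which is itself quoted without proof. So there is no in-paper argument to compare against directly.

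Your proposal is correct, and in fact your first route to producing $F$ is already a complete argument: the constraint system
\[
\lfloor \deg(v)/k\rfloor \;\le\; x(\delta(v)) \;\le\; \lceil \deg(v)/k\rceil,\qquad 0\le x_e\le 1
\]
has a totally unimodular coefficient matrix (the bipartite incidence matrix together with the identity) and integer bounds, and the fractional point $x\equiv 1/k$ is feasible; hence an integral $F$ exists. With this $F$ the induction on $k$ goes through exactly as you verify it. The second approach via K\H{o}nig plus rebalancing is unnecessary; it can be made to work, but you rightly flag it as the delicate part, and there is no need to pursue it once the TU argument is in hand.

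Relative to the supermodular-coloring route the paper points to, your argument is more elementary and self-contained: it uses only the integrality of degree-constrained subgraph polytopes in bipartite graphs, whereas deducing the result from Theorem~\ref{thm:supermo} requires setting $\mathcal{F}_1=\{\delta(a):a\in A\}$, $\mathcal{F}_2=\{\delta(b):b\in B\}$ and invoking a substantially heavier theorem. The trade-off is that the supermodular framework, once established, yields the statement as a one-line specialization and generalizes far beyond graphs, which is the direction the paper develops.
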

\noindent Theorem \ref{gupta78bipartite_general} corresponds to Theorem \ref{konig16} in the case when $k=\Delta(G)$. Let $\mu(v)$ denote the maximum number of parallel edges incident to $v$. The following theorem is an extension of Vizing's theorem by Gupta \cite{gupta1974}:

\begin{thm}[Gupta \cite{gupta1974}]
\label{gupta74general}
Let $G=(V,E)$ be a multigraph. For $k\in \mathbf{Z}_{>0}$, there exists a color assignment $\pi:E\rightarrow [k]$ satisfying the following two conditions for every $v\in V$\textup{:}
\begin{itemize}
    \item $|\pi(\delta(v))|\geq \min\{\mathrm{deg}(v),k-\mu(v)\}$ holds if $\mathrm{deg}(v)\leq k$, and
    \item $|\pi(\delta(v))|\geq \min\{\mathrm{deg}(v)-\mu(v),k\}$ holds otherwise. 
\end{itemize}
\end{thm}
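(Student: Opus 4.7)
The plan is to derive Theorem~\ref{gupta74general} as a specialization of the supermodular extension of Vizing's theorem that is the main result of this paper. The strategy is to encode Gupta's two vertex-wise conditions into a single family of set functions $\{p_v\}_{v\in V}$ on $2^{[k]}$, apply the main theorem to obtain a coloring $\pi:E\to[k]$ satisfying
\[
|\pi(\delta(v))\cap C|\geq p_v(C)\qquad \text{for every } v\in V \text{ and every } C\subseteq [k],
\]
and then recover both Gupta bounds by specializing to $C=[k]$.

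Concretely, for each $v\in V$ I would define $p_v : 2^{[k]}\to\mathbf{Z}_{\geq 0}$ by a piecewise-linear expression in $|C|$, $\mathrm{deg}(v)$, $\mu(v)$, and $k$, tuned so that $p_v([k])$ equals $\min\{\mathrm{deg}(v),k-\mu(v)\}$ when $\mathrm{deg}(v)\leq k$ and $\min\{\mathrm{deg}(v)-\mu(v),k\}$ when $\mathrm{deg}(v)\geq k$. Because $p_v$ will be the nonnegative truncation of a concave piecewise-linear function of $|C|$, verifying that each $p_v$ is strongly triple-intersecting supermodular should reduce to routine checks on intersecting pairs and triples of color subsets, exploiting the submodularity of $|C_1\cup C_2|$ and the supermodularity of $|C_1\cap C_2|$ as $|C|$ crosses the various thresholds in the formula. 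Once the main theorem produces such a $\pi$, setting $C=[k]$ immediately yields the two inequalities of Theorem~\ref{gupta74general}.

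The principal obstacle is the first step: finding a single closed-form $p_v$ that simultaneously reproduces Gupta's hybrid bound at $C=[k]$ and is strongly triple-intersecting supermodular on all of $2^{[k]}$. The case split between $\mathrm{deg}(v)\leq k$ and $\mathrm{deg}(v)\geq k$, together with the plateau behaviour of the bound on $[k-\mu(v),k]$, suggests $p_v$ must involve multiple nested $\max/\min$ operations, and checking triple-intersecting supermodularity across the various linear pieces is where the bulk of the technical effort will lie. A secondary concern is ensuring that the family $\{p_v\}_{v\in V}$ satisfies whatever cross-vertex compatibility hypothesis the main theorem requires, though because Gupta's conditions are genuinely vertex-local, this compatibility should essentially come for free.
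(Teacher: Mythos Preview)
Your proposal rests on a misreading of the paper's main theorem. Theorem~\ref{thm:vizing_supermodular} concerns a family $\mathcal{F}\subseteq 2^U$ of subsets of the \emph{ground set} $U$ (here $U=E$) together with a single function $g:\mathcal{F}\to\mathbf{Z}$, and its conclusion is $|\pi(X)|\geq g(X)$ for each $X\in\mathcal{F}$. It does not accept vertex-indexed functions $p_v:2^{[k]}\to\mathbf{Z}$ on subsets of \emph{colors}, and it provides no conclusion of the form $|\pi(\delta(v))\cap C|\geq p_v(C)$. So the encoding you sketch is not an instance of the main theorem at all, and the supermodularity checks you anticipate are beside the point.

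Even if you correct the framework and take $\mathcal{F}=\{\delta(v):v\in V\}$ with $g(\delta(v))$ equal to Gupta's target bound, the hypotheses of Theorem~\ref{thm:vizing_supermodular} (equivalently, of Theorem~\ref{new-edge-coloringthm}) fail: the set $S=\{v:g(\delta(v))+\mu(v)>k\}$ need not be stable, since vertices with $\mathrm{deg}(v)>k$ can be adjacent. The paper does \emph{not} deduce Theorem~\ref{gupta74general} directly from the supermodular theorem. Instead it proves it via Theorem~\ref{new-edge-coloringthm} together with a two-stage construction: first an edge-orientation algorithm on $G[W]$ (where $W=\{v:\mathrm{deg}_G(v)\geq k+1\}$) removes enough edges to a directed layer so that in the remaining undirected graph $H$ the stable-set hypothesis does hold for a suitably adjusted $c$; Theorem~\ref{new-edge-coloringthm} then colors $H$, and the directed edges are colored greedily afterward, each edge entering $v$ receiving a color absent at $v$. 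This orientation-plus-greedy step is the essential idea your plan is missing.
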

\noindent Theorem \ref{gupta74general} implies Theorem \ref{vizing65} in the case when $k=\Delta(G)+\mu(G)$. Theorem~\ref{gupta74general} was first announced by Gupta~\cite{gupta1974} without proof and subsequently proved by Fournier \cite{fournier1977}. Fournier's proof of Theorem~\ref{gupta74general} starts with any assignment of colors to the edges of $G$, and classifies the assignment into several cases, and finally shows the existence of a ``better'' assignment of colors in each case. 

Gupta \cite{gupta1974} provided another generalization of Theorem \ref{vizing65}, which was also subsequently proved by Fournier \cite{fournier1977}.
\begin{thm}[Gupta \cite{gupta1974}]
\label{gupta74general2}
Let $G=(V,E)$ be a multigraph. For $k\in \mathbf{Z}_{>0}$, suppose that $S=\{v\in V\mid \mathrm{deg}(v)+\mu(v) > k\}$ is a stable set. Then there exists a color assignment $\pi:E\rightarrow [k]$ such that $|\pi(\delta(v))|\geq \min\{\mathrm{deg}(v),k\}$ holds for every $v\in V$.
\end{thm}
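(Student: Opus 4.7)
I plan to argue by induction on $|E(G)|$; the base case $|E|=0$ is trivial. For the inductive step, pick an arbitrary edge $e=uv$ and remove it. Since $\mathrm{deg}_{G-e}(w)\le\mathrm{deg}_G(w)$ and $\mu_{G-e}(w)\le\mu_G(w)$ for every $w\in V$, the corresponding bad set $S':=\{w\in V\mid\mathrm{deg}_{G-e}(w)+\mu_{G-e}(w)>k\}$ is a subset of $S$ and hence stable. The inductive hypothesis yields a coloring $\pi':E\setminus\{e\}\to[k]$ of $G-e$ with $|\pi'(\delta_{G-e}(w))|\ge\min\{\mathrm{deg}_{G-e}(w),k\}$ for every $w$, and the task is to choose $\pi(e)\in[k]$ so that the analogous inequality holds for $G$.

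The key local observation is this: for an endpoint $w\in\{u,v\}$, if $\mathrm{deg}_G(w)>k$, then $\mathrm{deg}_{G-e}(w)\ge k$ and by induction $\pi'$ already attains all $k$ colors at $w$, imposing no constraint on $\pi(e)$; if instead $\mathrm{deg}_G(w)\le k$, then $\pi'$ uses exactly $\mathrm{deg}_G(w)-1$ distinct colors on the edges at $w$ in $G-e$, and $\pi(e)$ must be chosen from the set $M(w)\subseteq[k]$ of colors missing at $w$ under $\pi'$, which has size $k-\mathrm{deg}_G(w)+1\ge1$. Since $S$ is stable, at most one of $u,v$ lies in $S$, and in particular at most one can have $\mathrm{deg}_G>k$; when this occurs, the other endpoint lies outside $S$, satisfies $\mathrm{deg}_G\le k-1$, and has $|M|\ge2$, so an admissible choice of $\pi(e)$ trivially exists.

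The remaining genuine case is $\mathrm{deg}_G(u),\mathrm{deg}_G(v)\le k$ with $M(u)\cap M(v)=\emptyset$. I plan to handle it via a Vizing--Fournier style fan-and-Kempe-chain argument rooted at whichever of $u,v$ lies outside $S$ (at least one does, by stability). Starting from $\alpha\in M(u)$, one grows a fan $w_1,w_2,\ldots$ around $v$ whose successive fan edges carry colors $\alpha_0=\alpha,\alpha_1,\ldots$ with $\alpha_i\in M(w_i)$ and $\alpha_i=\pi'(vw_{i+1})$, and terminates either by a fan rotation (when some $\alpha_i$ also lies in $M(v)$) or by flipping an $\alpha_i\alpha_0$-alternating Kempe chain issued from $u$.

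The main obstacle I anticipate is when a fan vertex $w_i$ itself lies in $S$: at such $w_i$ the coloring $\pi'$ need not be proper and $M(w_i)$ may be empty, breaking the naive fan extension. I expect to resolve this by leveraging stability of $S$ once more: any vertex of $S$ is surrounded entirely by non-$S$ neighbors at which $\pi'$ is proper, so $\alpha\beta$-alternating Kempe chains issued from such a non-$S$ vertex behave as simple paths outside $S$; and at a fan vertex $w_i\in S$ the weaker requirement (palette size at least $k$, rather than a proper coloring) affords enough slack to reroute the recoloring locally at $w_i$ without creating new deficits. A careful case analysis in the spirit of Fournier's~\cite{fournier1977} proof of Theorem~\ref{gupta74general}, together with this stability-driven slack argument, should complete the proof.
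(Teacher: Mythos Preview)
Your plan follows the same Vizing--Fournier ``fan plus Kempe chain'' template that underlies the paper's argument. Note, however, that the paper does not prove Theorem~\ref{gupta74general2} in isolation: it establishes the more general Theorem~\ref{new-edge-coloringthm} (your statement is the special case $c(v)=\min\{\deg(v),k\}$), and that proof is framed as a maximal-partial-coloring argument rather than edge induction. These two framings are essentially interchangeable, and your opening reduction (endpoints of degree exceeding $k$ impose no constraint; otherwise the coloring is proper there and $|M(w)|=k-\deg_G(w)+1$) is correct.

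The substantive gap is precisely the obstacle you flag, and your proposed ``slack'' resolution does not work as stated. If $w_i\in S$ with $\deg_G(w_i)=k$ (note that $\deg_G(w_i)=k$ already forces $w_i\in S$ once $\mu(w_i)\ge 1$), the inductive coloring $\pi'$ is proper at $w_i$ with every color appearing exactly once, so $M(w_i)=\emptyset$ and there is no slack whatsoever; if $\deg_G(w_i)>k$, all $k$ colors are present but nothing guarantees that the particular color $\pi'(vw_i)$ repeats at $w_i$, so recoloring that edge can still drop $|\pi'(\delta(w_i))|$ below $k$. The paper resolves this differently: it grows the fan only through leaves \emph{outside} $S$ (condition~4 of the maximal sequence in Section~\ref{subsec:new_edge_color_proof}), so the $S$-obstruction surfaces not inside the fan but only in the Kempe-chain analysis, specifically Case~(III), when the alternating trail returns to the center $x$ along an edge $f_p=xy$ with $y\in S$. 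At that point the paper performs a \emph{restart}: it rotates the fan so that $e_0,\dots,e_l$ become colored while $f_p$ becomes the new uncolored edge, and swaps roles so that the new fan center is $y\in S$. Stability of $S$ then forces every neighbor of the new center to lie outside $S$, so the obstruction cannot recur and the argument terminates after at most one restart. This restart trick is the missing ingredient in your outline; the local slack you invoke at $w_i$ does not substitute for it.
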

\noindent Note that Theorem \ref{gupta74general2} coincides with Theorem \ref{vizing65} when $k=\Delta(G)+\mu(G)$. 

In this paper, we give the following generalization of Theorem \ref{gupta74general2}, which also implies Theorem \ref{gupta74general} in a certain sense.
\begin{thm}
\label{new-edge-coloringthm}
For a multigraph $G=(V,E)$ and $k\in \mathbf{Z}_{>0}$, let $c:V\rightarrow \mathbf{Z}_{+}$ be a function satisfying $c(v)\leq \min\{\mathrm{deg}(v),k\}$ for every $v\in V$. If $S=\{v\in V\mid c(v)+\mu(v) > k\}$ is a stable set, then there exists an assignment of colors $\pi:E\rightarrow [k]$ such that 
\begin{align}
\label{thm:edge_coloring_cd}
|\pi(\delta(v))|\geq c(v)
\end{align}
holds for every $v\in V$.
\end{thm}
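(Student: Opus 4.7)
The plan is to prove Theorem~\ref{new-edge-coloringthm} by an extremal argument combined with a Vizing--Fournier style fan/Kempe-chain recoloring, with the stable-set hypothesis on $S$ playing a role analogous to Vizing's adjacency lemma.

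I would first choose $\pi : E \to [k]$ maximizing the potential
\[
    \Phi(\pi) \;:=\; \sum_{v \in V} \min\bigl\{c(v),\,|\pi(\delta(v))|\bigr\},
\]
breaking ties by a secondary lexicographic rule. If $\Phi(\pi) = \sum_v c(v)$ we are done, so assume some vertex $v^*$ has $|\pi(\delta(v^*))| < c(v^*)$. Since $c(v^*) \leq \min\{\deg(v^*), k\}$, some color $\alpha$ is repeated on two edges $e_1, e_2 \in \delta(v^*)$ and some color $\beta \in [k]$ is missing at $v^*$. The strategy is to produce $\pi'$ with $\Phi(\pi') > \Phi(\pi)$, contradicting maximality. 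The simple swap --- recoloring $e_1 = v^* u$ from $\alpha$ to $\beta$ --- always raises the $v^*$-term of $\Phi$ by exactly one (since $\alpha$ is retained via $e_2$ and $\beta$ appears), and lowers the $u$-term by at most one; it fails to net-increase $\Phi$ only in the precise \emph{obstruction} configuration where $u$ has a unique $\alpha$-edge (namely $e_1$), already contains $\beta$, and satisfies $|\pi(\delta(u))| \leq c(u)$.

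When the simple swap is obstructed I would build a fan $u_1 = u, u_2, \dots$ of neighbors of $v^*$ by choosing at step $i$ a color $\gamma_i$ present at $u_i$ but missing at $v^*$ and letting $u_{i+1}$ be the other endpoint of a $v^*$-edge colored $\gamma_i$; the fan is eventually closed by a Kempe-chain swap on an appropriate two-color subgraph, producing the desired strict gain in $\Phi$. The main obstacle --- and the step that genuinely uses the hypothesis --- is showing that this fan always terminates successfully. If $v^* \in S$, then every fan vertex $u_i$ is adjacent to $v^*$ and is therefore outside $S$, so $c(u_i) + \mu(u_i) \leq k$ provides enough spare colors at each $u_i$ to keep the fan growing until a usable missing color is found; if $v^* \notin S$, then $v^*$ itself enjoys $c(v^*) + \mu(v^*) \leq k$, which gives enough freedom in the choice of $\alpha$ and $\beta$ for the simple swap to succeed already. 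A further technical wrinkle is that $\pi$ is not required to be a proper edge-coloring, so the $\alpha\beta$-subgraph need not decompose into paths and cycles; care must be taken that the Kempe swap operates only on the component containing $e_1$ without violating the $c$-constraint at vertices outside the fan, and this is where I expect the most delicate bookkeeping to lie.
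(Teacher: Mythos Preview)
Your fan is described backwards: you write ``$\gamma_i$ present at $u_i$ but missing at $v^*$'' and then look for a $v^*$-edge of color $\gamma_i$, but if $\gamma_i$ is missing at $v^*$ no such edge exists. The intended construction needs $\gamma_i$ \emph{missing at $u_i$}, with the next fan edge being a $v^*$-edge colored $\gamma_i$. This is presumably a slip, but it matters because the inequality $c(u_i)+\mu(u_i)\le k$ is precisely what guarantees enough colors \emph{missing at $u_i$} to extend the fan without repeating, which is why the fan vertices---not the center---must lie outside $S$.

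The genuine gap is your dichotomy on $v^*$. The case $v^*\in S$ is fine: every neighbor is then outside $S$ and the fan runs. But your claim that ``if $v^*\notin S$ then $c(v^*)+\mu(v^*)\le k$ gives enough freedom for the simple swap to succeed already'' is not correct. The obstruction to the swap $e_1=v^*u$, $\alpha\mapsto\beta$ lives entirely at $u$ (unique $\alpha$-edge, $\beta$ already present, $|\pi(\delta(u))|\le c(u)$), and the bound $c(v^*)+\mu(v^*)\le k$ says nothing about the colors at $u$; having more than $\mu(v^*)$ missing colors at $v^*$ does not force one of them to be missing at $u$. So when $v^*\notin S$ you still need the full fan/Kempe machinery, and now some fan vertex may lie in $S$, at which point your argument has no mechanism to proceed.

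The paper resolves this with a different extremal setup: it maximizes a \emph{partial} coloring $\pi:F\to[k]$ subject to $|\delta(v)\setminus F|+|\pi(\delta(v)\cap F)|\ge c(v)$ for all $v$, and tries to extend $F$ by one uncolored edge $e_0=xy_0$. The fan is built at $x$ with fan vertices $y_0,\dots,y_l\notin S$; the stable-set hypothesis is used not as a dichotomy on the center but via a \emph{restart}: if during the Kempe-trail analysis one encounters a would-be fan vertex $y\in S$, then necessarily $x\notin S$, and one uncolors the offending edge and restarts the entire argument with the roles of center and first fan vertex swapped (new center $y\in S$, so the issue cannot recur). Your full-coloring potential $\Phi$ has no uncolored edge to transfer, so this restart does not translate directly; you would need either to invent an analogous role-swap compatible with $\Phi$, or to switch to the partial-coloring invariant. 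The non-properness issue you flag at the end is real but handled in the paper by working with maximal alternating \emph{trails} rather than paths, with two explicit termination cases; that part is delicate but routine once the restart mechanism is in place.
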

\noindent In the case when $c(v)=\min\{\mathrm{deg}(v),k\}$ holds for every $v\in V$, Theorem \ref{new-edge-coloringthm} reduces to Theorem \ref{gupta74general2}. In addition, Theorem \ref{new-edge-coloringthm} yields an alternative proof of Theorem \ref{gupta74general} (see Section \ref{subsec:proof_of_gupta} for details).

\subsection{Supermodular extension of edge-coloring theorems}
Schrijver~\cite{schrijver1985} extended Theorems \ref{konig16} and \ref{gupta78bipartite_general} for bipartite multigraphs to a framework of supermodular functions on intersecting families. To describe this, we need some definitions. Let $U$ be a finite set. A pair of $X,Y\subseteq U$ is called an \textit{intersecting pair} (or $X$ and $Y$ are called \textit{intersecting}) if $X\cap Y\neq \emptyset$. A family $\mathcal{F}\subseteq 2^U$ is called an \textit{intersecting family} if $X\cup Y,X\cap Y\in \mathcal{F}$ holds for every intersecting pair $X,Y\in \mathcal{F}$. A function $g:\mathcal{F}\rightarrow \mathbf{R}$ is called \textit{intersecting supermodular} if $\mathcal{F}$ is an intersecting family and $g(X)+g(Y)\leq g(X\cup Y)+g(X\cap Y)$ holds for every intersecting pair $X,Y\in \mathcal{F}$. Schrijver \cite{schrijver1985} showed the following coloring-type theorem on an intersecting supermodular function.
\begin{thm}[Schrijver \cite{schrijver1985}]
\label{cor:supermo}
Let $\mathcal{F}\subseteq 2^U$ be an intersecting family and $g:\mathcal{F}\rightarrow \mathbf{Z}$ an intersecting supermodular function. For $k\in \mathbf{Z}_{>0}$, if $\min\{|X|,k\}\geq g(X)$ holds for each $X\in \mathcal{F}$, then there exists an assignment of colors $\pi:U\rightarrow [k]$ satisfying $|\pi(X)|\geq g(X)$ for each $X\in \mathcal{F}$.
\end{thm}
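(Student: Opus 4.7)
The plan is to prove Theorem~\ref{cor:supermo} by induction on $k$. The base case $k=1$ is immediate: the hypothesis $g(X)\leq \min\{|X|,1\}\leq 1$ makes the constant coloring $\pi\equiv 1$ satisfy $|\pi(X)|\geq g(X)$ for every $X\in\mathcal{F}$.

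For the inductive step I would carve off one color class. Concretely, I would seek $U_1\subseteq U$ such that (i) $U_1\cap X\neq\emptyset$ for every $X\in\mathcal{F}$ with $g(X)\geq 1$, and (ii) $|X\setminus U_1|\geq g(X)-1$ for every $X\in\mathcal{F}$, then assign color $1$ to every element of $U_1$ and apply the induction hypothesis to $U\setminus U_1$ with $k-1$ colors. The residual family $\mathcal{F}':=\{X\setminus U_1:X\in\mathcal{F},\,X\not\subseteq U_1\}$ is intersecting because the trace commutes with union and intersection, and the residual function
\begin{align*}
g'(X'):=\max_{X\in\mathcal{F},\,X\setminus U_1=X'}\bigl(g(X)-\mathbf{1}[U_1\cap X\neq\emptyset]\bigr)
\end{align*}
is intersecting supermodular by a short uncrossing argument on extremal preimages, using the inequality $\mathbf{1}[U_1\cap(X_1\cup X_2)\neq\emptyset]+\mathbf{1}[U_1\cap(X_1\cap X_2)\neq\emptyset]\leq \mathbf{1}[U_1\cap X_1\neq\emptyset]+\mathbf{1}[U_1\cap X_2\neq\emptyset]$ together with the supermodularity of $g$. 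Conditions (i) and (ii) with $g(X)\leq k$ give $g'(X')\leq\min\{|X'|,k-1\}$, so the induction hypothesis yields a coloring $\pi':U\setminus U_1\to\{2,\ldots,k\}$ with $|\pi'(X')|\geq g'(X')$; combining with color $1$ on $U_1$ gives $|\pi(X)|\geq g(X)$ for every $X\in\mathcal{F}$ after checking the three cases $X\subseteq U_1$, $X\cap U_1=\emptyset$, and $X\cap U_1\neq\emptyset\neq X\setminus U_1$ separately.

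The main obstacle is the existence of such a $U_1$. The upper bound in (ii) rewrites as $|U_1\cap X|\leq |X|-g(X)+1$ for every $X\in\mathcal{F}$, and the function $f(X):=|X|-g(X)+1$ is intersecting submodular because $|X|$ is modular and $-g$ is intersecting submodular. Thus the feasibility of $U_1$ reduces to finding a transversal of $\{X\in\mathcal{F}:g(X)\geq 1\}$ that is independent in the matroid obtained from $f$ by the Dilworth-type truncation. I would verify this by a Hall-type argument: suppose for contradiction that no such $U_1$ exists, take a minimal violating subfamily $\mathcal{G}\subseteq\mathcal{F}$, uncross its crossing pairs using intersecting supermodularity of $g$ to replace $\mathcal{G}$ by a laminar (or partitioning) subfamily with the same or worse defect, and combine the resulting chain inequality with $g(X)\leq k$ to derive a contradiction. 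This uncrossing and counting step is the technical heart of the theorem, after which the induction closes cleanly.
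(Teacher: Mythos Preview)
Your induction-on-$k$ strategy is a genuinely different route from the paper's. The paper does not peel off colors; instead it derives Theorem~\ref{cor:supermo} as the special case of its main Theorem~\ref{thm:vizing_supermodular}, whose proof takes a \emph{maximal partial coloring} $\pi_0:T_0\to[k]$ with $|X\setminus T_0|+|\pi_0(X\cap T_0)|\ge g(X)$ for all $X$, and then argues by contradiction that $T_0=U$ via a sequential-recoloring / Kempe-chain argument (Claims~\ref{clm:submo}--\ref{clm:final}). No induction on $k$, no carving off of a color class. Your plan is closer in spirit to the classical polymatroidal proofs, and your reduction step (construction of $\mathcal{F}'$ and $g'$, verification of intersecting supermodularity and of the bound $g'(X')\le\min\{|X'|,k-1\}$) is set up correctly.

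Where your write-up has a real gap is exactly where you flag it: the existence of $U_1$. The paragraph you offer is a promissory note, not an argument. You do not say what the ``defect'' of a subfamily is, what ``minimal violating'' means for a two-sided constraint system, or why uncrossing a crossing pair in $\mathcal{G}$ preserves infeasibility of the system $1\le |U_1\cap X|\le |X|-g(X)+1$. More tellingly, you propose to close the contradiction using $g(X)\le k$, but that bound is not the relevant one here: the existence of $U_1$ follows already from $g(X)\le |X|$ (which makes $f(X)=|X|-g(X)+1\ge 1$), while $g(X)\le k$ is needed only afterwards to get $g'\le k-1$ for the induction. So as written, the heart of the proof is missing an actual mechanism. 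Two honest ways to fill it: either invoke the integrality and nonemptiness of the generalized polymatroid $\{x\in[0,1]^U: g(X)-|X|+1\le x(X)\le 1\text{ if }g(X)\ge1\}$ (this is standard for paramodular pairs built from an intersecting supermodular $g$), or give a direct augmenting argument---start from any $U_1$ satisfying the upper bounds, and if some $X_0$ with $g(X_0)\ge1$ is missed, show one can add an element of $X_0$ and repair any newly tight upper bound by removing an element elsewhere, using uncrossing of the tight sets to find the swap. Either route is a page of work; your current sketch is not yet either.
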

We are now ready to describe the supermodular coloring theorem, which is a generalization of Theorems \ref{konig16} and \ref{gupta78bipartite_general} to a framework of intersecting supermodular functions.

\begin{thm}[Schrijver \cite{schrijver1985}]
\label{thm:supermo}
Let $\mathcal{F}_1,\mathcal{F}_2\subseteq 2^U$ be intersecting families, and $g_1:\mathcal{F}_1\rightarrow \mathbf{Z}$ and $g_2:\mathcal{F}_2\rightarrow \mathbf{Z}$ intersecting supermodular functions. For $k\in \mathbf{Z}_{>0}$, if $\min\{|X|,k\}\geq g_i(X)$ holds for each $i=1,2$ and each $X\in \mathcal{F}_i$, then there exists an assignment of colors $\pi:U\rightarrow [k]$ such that $|\pi(X)|\geq g_i(X)$ holds for each $i=1,2$ and each $X\in \mathcal{F}_i$.
\end{thm}
\noindent Tardos \cite{tardos1985} gave an alternative proof of Theorem \ref{thm:supermo} using properties on generalized matroids. Theorem \ref{thm:supermo} was further extended to more general frameworks such as a framework including skew-supermodular coloring \cite{frank2014}, and a framework of list supermodular coloring \cite{iwata2018,yokoi2019}.

\begin{figure}[tb]
  \centering
  \includegraphics[width=14cm]{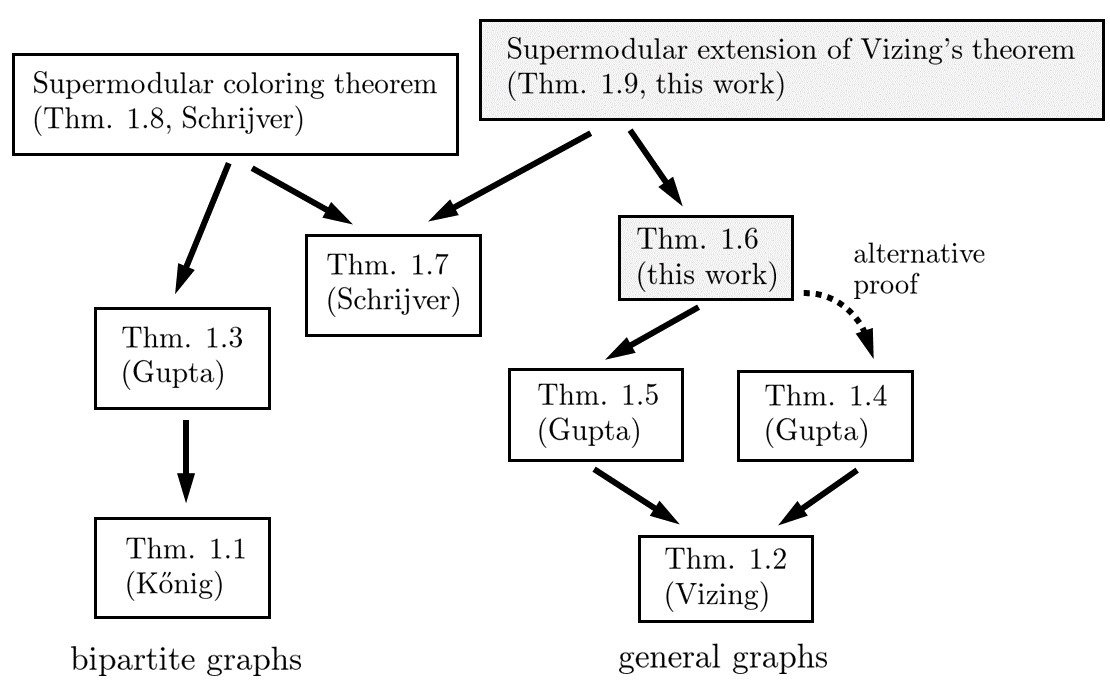}
  \caption{The relationship between the coloring-type theorems. The arrows mean implications.}
  \label{fig:relation}
\end{figure}

Figure \ref{fig:relation} describes the relationship between the above coloring-type theorems. A natural question arising from the supermodular coloring theorem is how to generalize Theorem \ref{vizing65} to a similar framework of supermodular functions.

Our main goal in this paper is to generalize Theorem \ref{vizing65} to a framework of a certain type of supermodular functions. In other words, we will provide a common generalization of Theorems \ref{vizing65} and \ref{cor:supermo}. To describe this, we need some definitions including new classes of intersecting families and intersecting supermodular functions. A family $\mathcal{F}\subseteq 2^U$ is called an \textit{intersecting 2/3-laminar family} if for every distinct $X_1,X_2,X_3\in \mathcal{F}$ satisfying $X_1\cap X_2\cap X_3\neq \emptyset$, there exist distinct two pairs $(i,j),(k,l)\in \{(1,2),(2,3),(3,1)\}$ such that $X_i\cup X_j,X_i\cap X_j,X_k\cup X_l,X_k\cap X_l\in \mathcal{F}$. A function $g:\mathcal{F}\rightarrow \mathbf{R}$ is called \textit{intersecting 2/3-supermodular} if $\mathcal{F}$ is an intersecting 2/3-laminar family and for every distinct 
$X_1,X_2,$ $X_3\in~ \mathcal{F}$ satisfying $X_1\cap X_2\cap X_3\neq \emptyset$, there exist distinct two pairs $(i,j),(k,l)\in \{(1,2),(2,3),(3,1)\}$ such that $X_i\cup X_j,X_i\cap X_j,X_k\cup X_l,X_k\cap X_l\in \mathcal{F}$ and
\begin{align}
    &g(X_i)+g(X_j)\leq g(X_i\cup X_j)+g(X_i\cap X_j),\notag\\
    &g(X_k)+g(X_l)\leq g(X_k\cup X_l)+g(X_k\cap X_l).\notag
\end{align}
The class of intersecting 2/3-supermodular functions is a common generalization of 2/3-supermodular functions and intersecting supermodular functions. A \textit{2/3-supermodular function} is a set function which satisfies the supermodular inequality for at least two pairs out of three pairs formed from every distinct three subsets. This class of functions was introduced in a separated paper \cite{mizutani}, and is a stronger version of 1/3-supermodular functions by B{\'{e}}rczi and Frank \cite{berczi2008}. There are some examples of intersecting 2/3-supermodular functions and their submodular variants, such as the rank function of a relaxation of sparse paving matroids, and a set function defined on a family $\{\delta(v)\mid v\in V\}$ for an undirected graph $G=(V,E)$. See Section \ref{sec:2/3-submo} for details on intersecting 2/3-supermodular functions. 

For a family $\mathcal{F}\subseteq 2^U$ and a function $g:\mathcal{F}\rightarrow \mathbf{R}$, a subfamily $\mathcal{L}\subseteq \mathcal{F}$ is called a \textit{$g$-laminar family} if for every pair of sets $X,Y\in \mathcal{L}$, at least one of the following two conditions holds.
\begin{itemize}
    \item At least one of $X\setminus Y,Y\setminus X,X\cap Y$ is the empty set.
    \item $X\cup Y,X\cap Y\in \mathcal{F}$ and $g(X)+g(Y)\leq g(X\cup Y)+g(X\cap Y)$ holds.
\end{itemize}
Since the first condition corresponds to the laminar family constraint, a $g$-laminar family is a relaxation of a laminar family. For $\mathcal{F}\subseteq 2^U$ and $X\in \mathcal{F}$, we define $D_{\mathcal{F}}(X)=\max\{|X\cap Y|\mid Y=\emptyset,\ \mathrm{or\ }Y\in \mathcal{F}\mathrm{\ and}\ X\not\subseteq Y\not\subseteq X\}$. We are now ready to describe a common generalization of Theorems \ref{vizing65} and \ref{cor:supermo}:
\begin{thm}
\label{thm:vizing_supermodular}
Let $\mathcal{F}\subseteq 2^U$ be an intersecting 2/3-laminar family and $g:\mathcal{F}\rightarrow \mathbf{Z}$ an intersecting 2/3-supermodular function. For $k\in \mathbf{Z}_{>0}$, suppose that $\mathcal{L}=\{X\in \mathcal{F}\mid g(X)+D_{\mathcal{F}}(X)>k\}$ is a $g$-laminar family and $\min\{|X|,k\}\geq g(X)$ holds for every $X\in \mathcal{F}$. Then there exists an assignment of colors $\pi:U\rightarrow [k]$ such that
\begin{align}
\label{thmeq:visupermo}
    |\pi(X)|\geq g(X)
\end{align}
holds for every $X\in \mathcal{F}$.
\end{thm}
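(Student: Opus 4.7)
The plan is to prove Theorem \ref{thm:vizing_supermodular} by induction on a composite measure such as $(|U|, |\mathcal{L}|)$ ordered lexicographically. The first step is to isolate the base case $\mathcal{L}=\emptyset$, in which $g(X) + D_{\mathcal{F}}(X) \leq k$ holds for every $X \in \mathcal{F}$. My intention is to reduce this case to Theorem \ref{cor:supermo} by extending the strongly triple-intersecting family $\mathcal{F}$ to an intersecting family $\widetilde{\mathcal{F}}$ equipped with an intersecting supermodular extension $\widetilde{g}$ of $g$ that still satisfies $\min\{|X|,k\} \geq \widetilde{g}(X)$ on $\widetilde{\mathcal{F}}$. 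The slack $D_{\mathcal{F}}(X) \leq k - g(X)$ is precisely what should let us extend $g$ while preserving the cap condition, and the triple-intersecting supermodular inequalities are the tool for defining $\widetilde{g}$ consistently on intersecting pairs $X_1,X_2\in\mathcal{F}$ whose union or intersection was not originally in $\mathcal{F}$.

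For the inductive step when $\mathcal{L} \neq \emptyset$, I would pick an inclusion-minimal $X^* \in \mathcal{L}$. By $g$-laminarity, every other $Y \in \mathcal{L}$ with $X^* \cap Y \neq \emptyset$ either contains $X^*$ or crosses $X^*$ in a way that forces $X^* \cup Y, X^* \cap Y \in \mathcal{F}$ together with the inequality $g(X^*)+g(Y) \leq g(X^* \cup Y)+g(X^* \cap Y)$; in the crossing case one can propagate coloring information from $X^*$ to the tighter set in the pair. Next I would pick an element $u \in X^*$ witnessing $D_{\mathcal{F}}(X^*)$, namely $u \in X^* \cap Y$ for some $Y \in \mathcal{F}$ incomparable with $X^*$ attaining $|X^* \cap Y| = D_{\mathcal{F}}(X^*)$, delete $u$ from $U$, and decrease $g$ by one on every set in $\mathcal{F}$ that previously contained $u$. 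I would verify that the new instance remains strongly triple-intersecting supermodular, that $D_{\mathcal{F}}(X^*)$ drops by one so that $X^*$ exits the new $\mathcal{L}$, and that the rest of $\mathcal{L}$ retains the $g$-laminar property. Induction then yields a valid coloring of $U \setminus \{u\}$, to be extended to $u$.

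The main obstacle is this extension step. After induction it is typical that no single color may be safely assigned to $u$, and one must perform the supermodular analog of a Kempe chain swap: flipping two colors on a carefully chosen subset $Z \subseteq U$. The delicate point is that such a swap must not drop $|\pi(X)|$ below $g(X)$ for any $X \in \mathcal{F}$. When two sets $X_1, X_2 \in \mathcal{F}$ are simultaneously threatened by the swap, together with a third set $X_3 \in \mathcal{F}$ naturally attached to $Z$ they should form a triple with $u \in X_1 \cap X_2 \cap X_3$, and the strongly triple-intersecting supermodular inequalities on at least two of the three pairs provide exactly the redundancy needed to certify that at least one of $X_1, X_2$ is in fact safe. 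This is precisely where the strengthening from intersecting to strongly triple-intersecting supermodular enters, mirroring the role of Fournier-style case analysis in the proof of Vizing's theorem. Organising the sequence of swaps and proving termination is where the bulk of the technical effort will lie.
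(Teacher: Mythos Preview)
Your proposal has two substantial gaps.

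\textbf{The base case.} Reducing the case $\mathcal{L}=\emptyset$ to Theorem~\ref{cor:supermo} does not go through. A strongly triple-intersecting family places no constraint whatsoever on a single intersecting pair $X_1,X_2\in\mathcal{F}$: absent a third set meeting $X_1\cap X_2$, neither $X_1\cup X_2$ nor $X_1\cap X_2$ need lie in $\mathcal{F}$, and no supermodular inequality is available. Take for instance $\mathcal{F}=\{\delta(v):v\in V\}$ for a multigraph; closing this under unions and intersections of intersecting pairs generates sets of a new shape, and you must define $\widetilde g$ on all of them so that \emph{every} intersecting supermodular inequality and the cap $\min\{|X|,k\}\geq\widetilde g(X)$ hold simultaneously. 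The slack you cite gives $g(X_i)\leq k-|X_1\cap X_2|$ for incomparable $X_1,X_2$, but this only yields $g(X_1)+g(X_2)-|X_1\cap X_2|\leq 2k-3|X_1\cap X_2|$, which need not be $\leq k$; so even capping $\widetilde g(X_1\cup X_2)$ fails in general. The base case already requires the full recoloring machinery.

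\textbf{The inductive step.} Deleting $u$ and invoking induction does not reduce the difficulty: extending the coloring back to $u$ means finding a color missing from every $\pi$-tight set containing $u$, and when no such color exists you must recolor. You acknowledge this is ``the main obstacle'' and sketch a Kempe-type swap, but that swap \emph{is} the entire content of the theorem. The paper does exactly this, not inside an induction but directly: it takes a maximum partial assignment $\pi_0:T_0\to[k]$ satisfying $|X\setminus T_0|+|\pi_0(X\cap T_0)|\geq g(X)$ for all $X\in\mathcal{F}$, then for an uncolored $u_0$ builds a Vizing-fan-style sequence $\{(Y_0,u_0),\ldots,(Y_l,u_l)\}$ of maximal tight sets together with a bicolor chain, and derives a contradiction through a long case analysis (Claims~\ref{clm:submo}--\ref{clm:final}). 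Your inductive wrapper adds bookkeeping you do not carry out (that the restricted instance is still strongly triple-intersecting supermodular, that no new sets enter $\mathcal{L}$, that $g$-laminarity persists) while leaving all of the real work to the unspecified ``sequence of swaps''. In short, the plan is not misguided in spirit, but both load-bearing steps are unproved, and the first appears to be false as stated.
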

\noindent Theorem \ref{thm:vizing_supermodular} also includes Theorem \ref{new-edge-coloringthm} as a special case. See Figure \ref{fig:relation} for the relationship between Theorem \ref{thm:vizing_supermodular} and other coloring theorems. The $g$-laminar family condition in Theorem \ref{thm:vizing_supermodular} generalizes the stable set condition in Theorem \ref{new-edge-coloringthm}.

The proof of Theorem \ref{thm:vizing_supermodular} constructs a desired coloring by repeating appropriate updates of the current coloring along with a ``bicolor chain'' and a proper sequence including an uncolored element. This construction comes from the proof technique of Theorem \ref{vizing65} called ``sequential recoloring'' by Berge and Fournier \cite{fournier1991}. The construction also uses an oracle for maximizing 2/3-supermodular functions. Due to the polynomial time algorithms to maximize 2/3-supermodular functions \cite{mizutani}, one can compute a desired coloring in polynomial time under some condition.
\begin{thm}
\label{thm:polytime}
A coloring in Theorem \ref{thm:vizing_supermodular} can be obtained in polynomial time if $\mathcal{F}=2^U$.
\end{thm}

\subsection{Organization of the paper}
The rest of this paper is organized as follows. Section \ref{sec:2/3-submo} is an introduction for intersecting 2/3-supermodular functions, and describes the relationship between intersecting 2/3-supermodular functions and other function classes. Section \ref{chap:edge-coloring} provides a proof of Theorem \ref{new-edge-coloringthm}, which is based on the proof of Theorem \ref{vizing65} by Berge and Fournier \cite{fournier1991}. Section \ref{chap:edge-coloring} also gives an alternative proof of Theorem \ref{gupta74general} using Theorem \ref{new-edge-coloringthm}. Section \ref{chap:supermodular_extension} provides a proof of Theorem \ref{thm:vizing_supermodular} combining the proof technique of Theorem \ref{vizing65} by Berge and Fournier~\cite{fournier1991}, and that of Theorems \ref{cor:supermo} and \ref{thm:supermo} by Schirijver \cite{schrijver1985}. Section \ref{sec:impl} proves that Theorem \ref{thm:vizing_supermodular} includes Theorems \ref{new-edge-coloringthm} and \ref{cor:supermo} as special cases. Section \ref{sec:polyalgo} shows that the construction in the proof of Theorem \ref{thm:vizing_supermodular} yields a polynomial time algorithm to obtain a desired coloring under a certain condition with the aid of polynomial algorithms to maximize 2/3-supermodular functions \cite{mizutani}.

\section{Intersecting 2/3-supermodular functions}
\label{sec:2/3-submo}
Let $U$ be a finite set. A set function $f:2^U\rightarrow \mathbf{R}$ is called \textit{submodular} if the \textit{submodular inequality} $f(X)+f(Y)\geq f(X\cup Y)+f(X\cap Y)$ holds for any pair of sets $X,Y\subseteq U$. B{\'{e}}rczi and Frank \cite{berczi2008} introduced \textit{1/3-submodular functions} $f:2^U\rightarrow \mathbf{R}$, which satisfy the submodular inequality for at least one pair out of three pairs formed from every distinct three subsets. The class of 1/3-submodular functions includes the minimum of two matroid rank functions \cite{barasz2006,berczi2023}, and the minimum of two submodular functions. If $f$ satisfies the submodular inequality for at least two pairs out of three pairs formed from every distinct three subsets, then $f$ is called \textit{2/3-submodular} \cite{mizutani}. The class of 1/3-submodular functions includes the class of 2/3-submodular functions, and the class of 2/3-submodular functions includes the class of submodular functions. One example of 2/3-submodular functions is a relaxation of rank functions of sparse paving matroids.
\begin{ex}
Consider the base family $\mathcal{B}$ of a uniform matroid on the ground set $U$ with rank $k$ such that $2\leq k\leq |U|-2$. Let $\mathcal{F}\subseteq \mathcal{B}$ be a family such that any two distinct sets $X,Y\in \mathcal{F}$ satisfy $|X\cap Y|\leq k-2$. Then, a matroid with a base family $\mathcal{B}\setminus \mathcal{F}$ is called a \textit{sparse paving matroid}. Let $\mathcal{F}'\subseteq \mathcal{B}$ be a family such that any three distinct sets $X,Y,Z\in \mathcal{F}'$ satisfy $\min\{|X\cap Y|,|X\cap Z|\}\leq k-2$. Define a rank function $r:2^U\rightarrow \mathbf{Z}$ of $\mathcal{B}\setminus \mathcal{F}'$ in the same manner as matroid rank functions:
\begin{align}
r(X)=\max\{|X\cap B|\mid B\in \mathcal{B}\setminus \mathcal{F}'\}\ \ \ (X\subseteq U).\notag
\end{align}
Then, $r$ is not necessarily submodular but 2/3-submodular. One can show that $r$ is 2/3-submodular as follows. For any set $X\subseteq U$ with $|X|=k-1$, it holds that $r(X)=|X|$ because for distinct three elements $u_1,u_2,u_3\in U\setminus X$, at least one of $X\cup \{u_1\},X\cup \{u_2\},X\cup \{u_3\}$ is included in $\mathcal{B}\setminus \mathcal{F}'$ by the definition of $\mathcal{F}'$. This implies $r(X)=|X|$ for any $X\subseteq U$ with $|X|\leq k-1$. Similarly, for any $X\subseteq U$ with $|X|=k+1$, it holds that $r(X)=k$ because for distinct three elements $u_1,u_2,u_3\in X$, at least one of $X\setminus \{u_1\},X\setminus \{u_2\},X\setminus \{u_3\}$ is included in $\mathcal{B}\setminus \mathcal{F}'$ by the definition of $\mathcal{F}'$. This also implies that $r(X)=k$ for any $X\subseteq U$ with $|X|\geq k+1$. For any $X\subseteq U$ with $|X|=k$, it holds that $r(X)=k-1$ if $X\in \mathcal{F}'$, and $r(X)=k$ if $X\notin \mathcal{F}'$. Hence, if $X,Y\notin \mathcal{F}'$, then $r(X)+r(Y)\geq r(X\cup Y)+r(X\cap Y)$ holds by the submodularity of matroid rank functions. Consider the case when $X\in \mathcal{F}'$. Since $r(X)=k-1$, if $r(X)+r(Y)< r(X\cup Y)+r(X\cap Y)$, then $r(X\cup Y)=k$ and $r(Y)=r(X\cap Y)$ hold, which implies that $Y\setminus X\ne \emptyset$ and $k-1=r(X)\geq r(X\cap Y)=r(Y)$. Then we have $|Y|>|X\cap Y|$ and $k-1\geq r(Y)=r(X\cap Y)$, which implies that $Y\in \mathcal{F}'$ and $|X\cap Y|=k-1$. Therefore, the submodular inequality of $r$ does not hold only for $X,Y\in \mathcal{F}'$ with $|X\cap Y|=k-1$. Thus, $r$ is 2/3-submodular by the definition of $\mathcal{F}'$.
\end{ex}
A set function $g$ is called \textit{supermodular} (resp.\textit{ 1/3-supermodular}, \textit{ 2/3-supermodular}) if $-g$ is submodular (resp. 1/3-submodular, 2/3-submodular). There are some intersecting variants of these supermodular functions. A family $\mathcal{F}\subseteq 2^U$ is called an \textit{intersecting family} if every pair $X,Y\in \mathcal{F}$ with $X\cap Y\ne \emptyset$ satisfies $X\cup Y,X\cap Y\in \mathcal{F}$. A set function $g:\mathcal{F}\rightarrow \mathbf{R}$ is called \textit{intersecting supermodular} if $\mathcal{F}$ is an intersecting family and $g(X)+g(Y)\leq g(X\cup Y)+g(X\cap Y)$ holds for every pair $X,Y\in \mathcal{F}$ with $X\cap Y\ne \emptyset$. Schrijver \cite{schrijver1985} generalized K\H{o}nig's edge-coloring theorem \cite{konig1916} to a framework defined with two intersecting supermodular functions. To describe a supermodular extension of Vizing's edge-coloring theorem \cite{vizing1965}, this paper introduces an intersecting version of 2/3-supermodular functions. A family $\mathcal{F}\subseteq 2^U$ is called an \textit{intersecting 2/3-laminar family} if for every distinct three sets $X_1,X_2,X_3\in \mathcal{F}$ with $X_1\cap X_2\cap X_3\ne \emptyset$, there exist distinct two pairs of indices $\{i,j\},\{k,l\}\subseteq \{1,2,3\}$ such that $X_i\cup X_j,X_i\cap X_j,X_k\cup X_l,X_k\cap X_l\in \mathcal{F}$. A set function $g:\mathcal{F}\rightarrow \mathbf{R}$ is called \textit{intersecting 2/3-supermodular} if $\mathcal{F}$ is an intersecting 2/3-laminar family and for every distinct three sets $X_1,X_2,X_3\in \mathcal{F}$ with $X_1\cap X_2\cap X_3\ne \emptyset$, there exist distinct two pairs of indices $\{i,j\},\{k,l\}\subseteq \{1,2,3\}$ such that $X_i\cup X_j,X_i\cap X_j,X_k\cup X_l,X_k\cap X_l\in \mathcal{F}$ and $f(X_i)+f(X_j)\leq f(X_i\cup X_j)+f(X_i\cap X_j),\ f(X_k)+f(X_l)\leq f(X_k\cup X_l)+f(X_k\cap X_l)$. The class of intersecting 2/3-supermodular functions includes 2/3-supermodular functions and intersecting supermodular functions. The following is an example of intersecting 2/3-supermodular functions.
\begin{ex}
Let $G=(V,E)$ be a multigraph. Define $\mathcal{F}=\{\delta(v)\mid v\in V\}$, where $\delta(v)$ denotes the set of edges incident with $v$. Then a set function $g:\mathcal{F}\rightarrow \mathbf{R}$ is intersecting 2/3-supermodular regardless of the values of $g$ because any three distinct sets $X,Y,Z\in \mathcal{F}$ satisfy $X\cap Y\cap Z=\emptyset$.
\end{ex}
In the value oracle model, while it requires an exponential number of oracle calls to minimize 1/3-submodular functions \cite{berczi2008}, 2/3-submodular functions can be minimized in polynomial time using the ellipsoid method \cite{mizutani}.
\begin{thm}[\cite{mizutani}]
\label{thm:min23}
Let $f:2^U\rightarrow \mathbf{Z}$ be an integer-valued 2/3-submodular function. Then a minimizer of $f$ can be computed in polynomial time in $|U|$ and $\log B$, where $B$ is an upper bound of the absolute values of $f$.
\end{thm}
Theorem \ref{thm:min23} yields a polynomial time algorithm to obtain a coloring of Theorem \ref{thm:vizing_supermodular}. See Section \ref{sec:polyalgo} for details.

\section{An extension of Gupta's theorem}
\label{chap:edge-coloring}

\subsection{Proof of Theorem \ref{new-edge-coloringthm}}
\label{subsec:new_edge_color_proof}
Though we will prove a generalization of Theorem \ref{new-edge-coloringthm} in Section \ref{chap:supermodular_extension}, here we present the proof of Theorem 1.6 because it can be described with only graph terminology and may be of independent use. Similar to the proof of Theorem \ref{vizing65} by Berge and Fournier \cite{fournier1991}, the modification of a coloring called ``sequential recoloring'' plays an important role in the proof of Theorem \ref{new-edge-coloringthm}. 
\begin{proof}[Proof of Theorem \ref{new-edge-coloringthm}]
Let $F\subseteq E$ be a maximum subset such that there exists a color assignment $\pi:F\rightarrow [k]$
satisfying 
\begin{align}
\label{edge_coloring_cd}
    |\delta(v)\setminus F|+|\pi(\delta(v)\cap F)|\geq c(v)
\end{align}
for every $v\in V$. Such a set $F$ does exist because $F=\emptyset$ satisfies (\ref{edge_coloring_cd}). Our aim is to show that $F=E$, which implies that (\ref{edge_coloring_cd}) coincides with (\ref{thm:edge_coloring_cd}). Suppose for a contradiction that $F\neq E$. Take $e_0\in E\setminus F$ and let $x$ and $y_0$ be the endpoints of $e_0$. Since $S$ is a stable set, without loss of generality we may assume that $y_0\notin S$. If $y_0$ satisfies (\ref{edge_coloring_cd}) with strict inequality, then extend the domain $F$ of $\pi$ to $F'=F\cup \{e_0\}$ and set $\pi(e_0)=\alpha$ for $\alpha\notin \pi(\delta(x)\cap F)$ (if $\left|\pi(\delta(x)\cap F)\right|=k$, then set $\pi(e_0)$ as any color). This extended color assignment $\pi$ satisfies $|\delta(v)\setminus F'|+|\pi(\delta(v)\cap F')|\geq c(v)$ for every $v\in V$, which contradicts the maximality of $F$. So we may assume that $y_0$ satisfies (\ref{edge_coloring_cd}) with equality. Let $\{e_0,e_1,\ldots,e_l\}$ be a maximal sequence of distinct edges incident to $x$ satisfying the following five conditions, where $\alpha_i=\pi(e_i)$ for each $i\in [l]$, and $y_i$ is the endpoint of $e_i$ other than $x$ for each $i\in [l]$:
\begin{enumerate}
    \item $e_i\in F$ for every $i\in [l]$.
    \item $\alpha_i\notin \pi(\delta(y_{i-1})\cap F)$ for every $i\in [l]$.
    \item $\alpha_{i+1}\neq \alpha_{j+1}$ for every $0\leq i\neq j\leq l-1$ with $y_i=y_j$.
    \item $y_0,\ldots,y_l\notin S$.
    \item (\ref{edge_coloring_cd}) is satisfied with equality when $v=y_0,\ldots,y_l$.
\end{enumerate}
Such a sequence does exist because the sequence $\{e_0\}$ satisfies the above conditions. Since $y_l\notin S$, we have $c(y_l)+\mu(y_l)\leq k$. Since $y_l$ satisfies (\ref{edge_coloring_cd}) with equality, we have
\begin{align}
    |\pi(\delta(y_l)\cap F)|+\mu(y_l)\leq |\delta(y_l)\setminus F|+|\pi(\delta(y_l)\cap F)|+\mu(y_l)=c(y_l)+\mu(y_l)\leq k.\notag
\end{align}
This implies that $k-|\pi(\delta(y_l)\cap F)|\geq \mu(y_l)$. That is, the number of colors not
contained in $\pi(\delta(y_l)\cap F)$ is at least $\mu(y_l)$. So there exists a color $\alpha_{l+1}\notin \pi(\delta(y_l)\cap F)$ satisfying $\alpha_{l+1}\neq \alpha_{i+1}$ for every $i<l$ with $y_i=y_l$. If $\pi(\delta(x)\cap F)$ contains all of the $k$ colors, then extend the domain $F$ of $\pi$ to $F'$ defined above and set $\pi(e_0)=\alpha_1\notin \pi(\delta(y_0)\cap F)$. This extended color assignment $\pi$ satisfies $|\delta(v)\setminus F'|+|\pi(\delta(v)\cap F')|\geq c(v)$ for every $v\in V$, which contradicts the maximality of $F$. Hence, we may assume that there exists a color $\beta\notin \pi(\delta(x)\cap F)$. Consider the case when $\beta\notin \pi(\delta(y_l)\cap F)$. Define the following color assignment $\pi_l:F'\rightarrow [k]$:
\begin{align}
    \pi_l(e)=\left\{
\begin{array}{lll}
\alpha_{i+1} & (e=e_i,\ 0\leq i\leq l-1),\\
\beta & (e=e_l),\\
\pi(e) & (\mathrm{otherwise}).
\end{array}
\right.\notag
\end{align}
Then we have $|\delta(v)\setminus F'|+|\pi_l(\delta(v)\cap F')|\geq c(v)$ for every $v\in V$, which contradicts the maximality of $F$. So we may assume that $\beta\in \pi(\delta(y_l)\cap F)$. Let $P=(f_1,\ldots,f_p)$ be a maximal trail starting at $y_l$ such that $f_i\in F$ and $\pi(f_i)\in \{\alpha_{l+1},\beta\}$ hold for each $i\in [p]$, and $\pi(f_i)\neq \pi(f_{i+1})$ holds for each $i\in [p-1]$ (since $\beta\in \pi(\delta(y_l)\cap F)$, $P$ consists of at least one edge). Then $P$ satisfies one of the following two conditions (see Figure \ref{fig:new_edgecolor}):
\begin{enumerate}[label=(\alph*),ref=\alph*]
    \item \label{P_cd1}The endpoint $t$ of $P$ other than $y_l$ satisfies either $\alpha_{l+1}\notin \pi(\delta(t)\cap F)$ or $\beta\notin \pi(\delta(t)\cap F)$.
    \item \label{P_cd2}There exist two edges $e_1,e_2\in \delta(t)\cap F\cap P$ satisfying $\{\pi(e_1),\pi(e_2)\}=\{\alpha_{l+1},\beta\}$.
\end{enumerate}

\begin{figure}[tb]
  \centering
  \includegraphics[width=12cm]{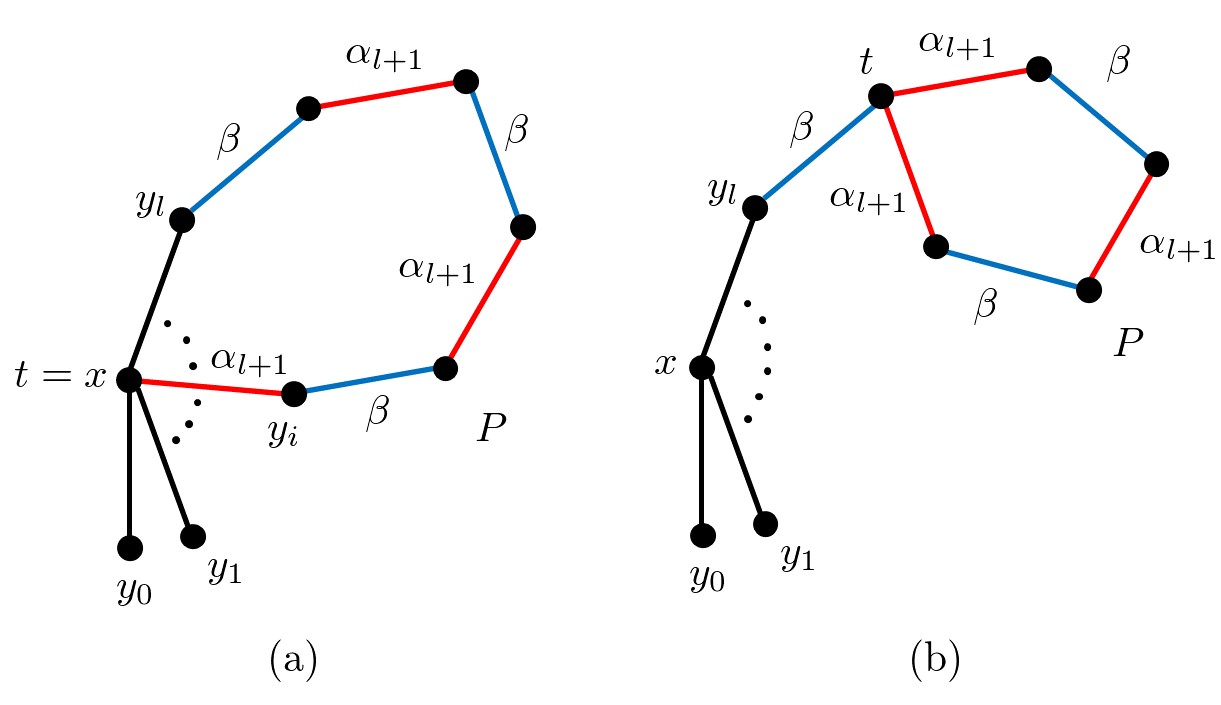}
  \caption{A maximal trail $P$ satisfying the condition (\ref{P_cd1}), and that satisfying the condition (\ref{P_cd2}).}
  \label{fig:new_edgecolor}
\end{figure}

We now consider three cases (I) $t\neq x,y_0,\ldots,y_{l-1}$, (II) $t=y_i$ for some $0\leq i\leq l-1$, and (III) $t=x$ separately.
\subsubsection*{\underline{(I) $\bm{t\neq x,y_0,\ldots,y_{l-1}.}$}}
Define the following color assignment $\pi':F'\rightarrow ~[k]$:
\begin{align}
    \pi'(e)=\left\{
\begin{array}{lllll}
\beta & (e\in P\mathrm{\ and}\ \pi(e)=\alpha_{l+1},\ \mathrm{or\ }e=e_l),\\
\alpha_{l+1} & (e\in P,\ \pi(e)=\beta),\\
\alpha_{i+1} & (e=e_i\ \mathrm{for\ some\ }0\leq i\leq l-1),\\
\pi(e) & (\mathrm{otherwise}).
\end{array}
\right.\notag
\end{align}
Then we have $|\pi'(\delta(v)\cap F')|\geq |\pi(\delta(v)\cap F)|$ for every $v\in V$ because $t\neq x,y_0,\ldots,y_{l-1}$ and $t$ satisfies one of (\ref{P_cd1}) and (\ref{P_cd2}). We also have $|\pi'(\delta(x)\cap F')|> |\pi(\delta(x)\cap F)|$ and $|\pi'(\delta(y_0)\cap F')|> |\pi(\delta(y_0)\cap F)|$. Hence, we have $|\delta(v)\setminus F'|+|\pi'(\delta(v)\cap F')|\geq c(v)$ for every $v\in V$, which contradicts the maximality of $F$. 

\subsubsection*{\underline{(II) $\bm{t=y_i}$ \textbf{for some} $\bm{0\leq i\leq l-1}.$}}
We consider two cases $\alpha_{l+1}\in \pi(\delta(y_i)\cap F)$ and $\alpha_{l+1}\notin \pi(\delta(y_i)\cap F)$ separately.
\begin{itemize}
    \item Consider the case when $\alpha_{l+1}\in \pi(\delta(y_i)\cap F)$. Then we have $\alpha_{j+1}\neq \alpha_{l+1},\beta$ for every $j$ with $y_j=y_i$, which implies that $|\pi'(\delta(y_i)\cap F')|\geq  |\pi(\delta(y_i)\cap F)|$ holds for $\pi'$ and $F'$ defined above because one of the conditions (\ref{P_cd1}) and (\ref{P_cd2}) holds. Hence, we have $|\delta(v)\setminus F'|+|\pi'(\delta(v)\cap F')|\geq c(v)$ for every $v\in V$, which contradicts the maximality of $F$.
    \item Consider the case when $\alpha_{l+1}\notin \pi(\delta(y_i)\cap F)$. Then we have $\pi(f_p)=\beta$. Define the following color assignment $\pi_i:F'\rightarrow ~[k]$:
\begin{align}
    \pi_i(e)=\left\{
\begin{array}{lllll}
\beta & (e\in P\mathrm{\ and}\ \pi(e)=\alpha_{l+1},\ \mathrm{or\ }e=e_i),\\
\alpha_{l+1} & (e\in P,\ \pi(e)=\beta),\\
\alpha_{j+1} & (e=e_j\ \mathrm{for\ some\ }0\leq j\leq i-1),\\
\pi(e) & (\mathrm{otherwise}).\notag
\end{array}
\right.
\end{align}
Then we have $|\pi_i(\delta(y_i)\cap F')|\geq  |\pi(\delta(y_i)\cap F)|$ because $\alpha_{l+1}\notin \pi(\delta(y_i)\cap F)$ and $\pi(f_p)=\beta$. Hence, we have $|\delta(v)\setminus F'|+|\pi_i(\delta(v)\cap F')|\geq c(v)$ for every $v\in V$, which contradicts the maximality of $F$.
\end{itemize}

\subsubsection*{\underline{(III) $\bm{t=x}.$}}
Let $y$ be the endpoint of $f_p$ other than $x$. We consider two cases $y\neq y_0,\ldots,y_{l-1}$ and $y=y_i$ for some $0\leq i\leq l-1$ separately.
\begin{itemize}
    \item Consider the case when $y\neq y_0,\ldots,y_{l-1}$. Since $\beta\notin \pi(\delta(x)\cap F)$, we have $\pi(f_p)=\alpha_{l+1}$. Then by the maximality of the sequence $\{e_0,\ldots,e_l\}$, $y$ satisfies $y\in S$ or (\ref{edge_coloring_cd}) with strict inequality. Consider the case when $y\in S$. Define the following color assignment $\pi_x:F'\setminus \{f_p\}\rightarrow [k]$:
    \begin{align}
    \pi_x(e)=\left\{
    \begin{array}{ll}
    \alpha_{i+1} & (e=e_i\ \mathrm{for\ some\ }0\leq i\leq l),\\
    \pi(e) & (\mathrm{otherwise}).
    \end{array}
    \right.\notag
    \end{align}
    Then we have $|\delta(v)\setminus (F'\setminus \{f_p\})|+|\pi_x(\delta(v)\cap (F'\setminus \{f_p\}))|\geq c(v)$ for every $v\in V$. We now redefine $x,y_0,e_0,\pi,F$ to be $y,x,f_p,\pi_x,F'\setminus \{f_p\}$, respectively. Then we again start from the beginning of this proof with the redefined $x,y_0,e_0,\pi,F$. Since the redefined $x$ satisfies $x\in S$, every neighbor $v$ of $x$ satisfies $v\notin S$ in the redefined setting. So we may assume that $y\notin S$. This implies that $y$ satisfies (\ref{edge_coloring_cd}) with strict inequality. Define the following color assignment $\pi_x':F'\rightarrow [k]$:
     \begin{align}
    \pi_x'(e)=\left\{
    \begin{array}{lll}
    \alpha_{i+1} & (e=e_i\ \mathrm{for\ some\ }0\leq i\leq l),\\
    \beta & (e=f_p),\\
    \pi(e) & (\mathrm{otherwise}).
    \end{array}
    \right.\notag
    \end{align}
    Then we have $|\delta(v)\setminus F'|+|\pi_x'(\delta(v)\cap F')|\geq c(v)$ for every $v\in V$, which contradicts the maximality of $F$.
    \item Consider the case when $y=y_i$ for some $0\leq i\leq l-1$. Similar to the first case, we have $\pi(f_p)=\alpha_{l+1}$. Then by the maximality of the sequence $\{e_0,\ldots,e_l\}$, we have $f_p=e_i$ for some $1\leq i\leq l-1$. Define the following color assignment $\pi_x'':F'\rightarrow [k]$:
    \begin{align}
    \pi_x''(e)=\left\{
    \begin{array}{llll}
    \beta & (e\in P,\ \pi(e)=\alpha_{l+1}),\\
    \alpha_{l+1} & (e\in P,\ \pi(e)=\beta),\\
    \alpha_{j+1} & (e=e_j\ \mathrm{for\ some\ }0\leq j\leq i-1),\\
    \pi(e) & (\mathrm{otherwise}).
    \end{array}
    \right.\notag
    \end{align}
    Then we have $|\delta(v)\setminus F'|+|\pi_x''(\delta(v)\cap F')|\geq c(v)$ for every $v\in V$, which contradicts the maximality of $F$.
\end{itemize}
\end{proof}

\subsection{Proving Theorem \ref{gupta74general} via Theorem \ref{new-edge-coloringthm}}
\label{subsec:proof_of_gupta}
In this section, we give an alternative proof of Theorem \ref{gupta74general} using Theorem \ref{new-edge-coloringthm}.

\begin{proof}[Proof of Theorem \ref{gupta74general}]
Our aim is to construct a color assignment $\pi$ satisfying the two conditions in Theorem \ref{gupta74general}. To construct such an assignment, we first appropriately orient some of edges in $G$. Then we assign colors to undirected edges in $G$ using Theorem \ref{new-edge-coloringthm}. After that, for each directed edge $e$ entering a vertex $v$, we successively assign a color $\alpha$ to $e$ such that $\alpha$ does not occur on (already colored) undirected and directed edges incident to $v$. By this procedure, we obtain the desired color assignment satisfying the two conditions in Theorem \ref{gupta74general}.

Let $W=\{v\in V\mid \mathrm{deg}_G(v)\geq k+1\}$. For $U\subseteq V$, the \textit{induced subgraph} $G[U]$ is the graph with the vertex set $U$ and the edge set consisting of all edges spanned by $U$. To orient some of the edges of $G$, we execute the following algorithm:
\vspace{10pt}
\\
\textbf{Edge-orientation algorithm}
\vspace{-2pt}
\begin{description}
    \item[\textbf{Step 1.}]\label{eoalgo:step1} Take a cycle consisting of undirected edges in $G[W]$, or a path $P$ consisting of undirected edges in $G[W]$ such that each endpoint of $P$ is not incident to undirected edges in $G[W]$ except for edges in $P$ (see Figure \ref{fig:edge_orientation}). Orient edges of this cycle or path in the same direction.
    \item[\textbf{Step 2.}]\label{eoalgo:step2} For each $v\in W$ satisfying $\delta_G^+(v)=\min\{\mathrm{deg}_G(v)-k,\ \mu_G(v)\}$, update $W:=W\setminus \{v\}$.
    \item[\textbf{Step 3.}]\label{eoalgo:step3} If there exist no undirected edges in $G[W]$, then terminate the algorithm. Otherwise, go back to Step 1.
\end{description}
\begin{figure}[tb]
  \centering
  \includegraphics[width=10cm]{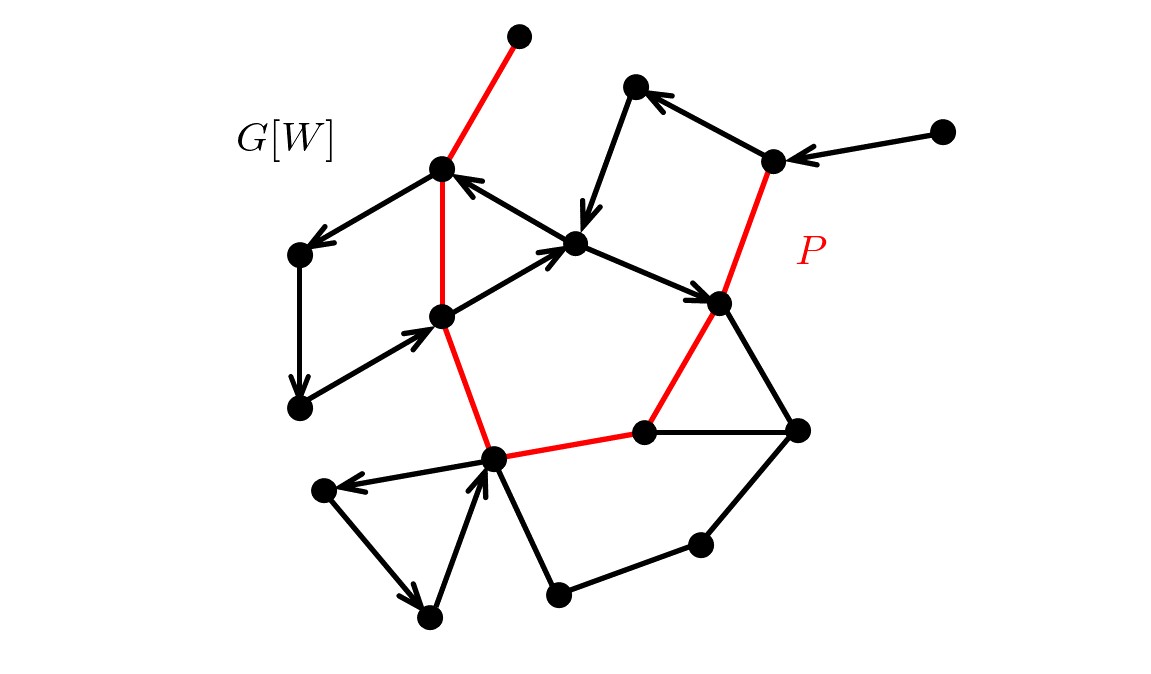}
  \caption{A path $P$ in Step 1. of the edge-orientation algorithm.}
  \label{fig:edge_orientation}
\end{figure}

Note that $\delta_G^+(v)$ denotes the number of directed edges leaving $v$, and $\mathrm{deg}_G(v)$ denotes the number of edges (undirected or directed edges) incident to $v$. In addition, $\mu_G(v)$ denotes the maximum number of edges (undirected or directed edges) between $v$ and any vertex in $G$. We now execute the algorithm, and let $H$ be a multigraph obtained by deleting all directed edges in $G$. Define
\begin{equation}
    c(v)=\begin{cases}
        \min\{\mathrm{deg}_G(v),\ k-\mu_G(v)\}& (\mathrm{deg}_G(v)\leq k),\\
        \min\{\mathrm{deg}_G(v)-\mu_G(v),\ k\}-\delta_G^{-}(v)& (\mathrm{deg}_G(v)\geq k+1),
    \end{cases}\notag
\end{equation}
for every $v\in V$, where $\delta_G^-(v)$ denotes the number of directed edges entering $v$. Suppose that there exists a color assignment $\pi:E(H)\rightarrow [k]$ satisfying $|\pi(\delta_H(v))|\geq c(v)$ for every $v\in V$, where $E(H)$ denotes the set of all edges in $H$. Then, assign a color $\pi(e)$ to $e$ for each edge $e\in E(H)$. After that, assign a color $\alpha$ to $e$ for each directed edge $e\in E\setminus E(H)$ entering $v\in V$ one by one so that $\alpha$ does not occur on already colored undirected or directed edges incident to $v$ (if such a color $\alpha$ does not exist, then assign any color to $e$). Since  we have $|\pi(\delta_H(v))|\geq c(v)=\min\{\mathrm{deg}_G(v),\ k-\mu_G(v)\}$ for every $v\in V$ with $\mathrm{deg}_G(v)\leq k$, and we have $|\pi(\delta_H(v))|+\delta_G^{-}(v)\geq c(v)+\delta_G^{-}(v)=\min\{\mathrm{deg}_G(v)-\mu_G(v),\ k\}$ for every $v\in V$ with $\mathrm{deg}_G(v)\geq k+1$, the above color assignment satisfies the two conditions in Theorem \ref{gupta74general}. Hence, it suffices to show that we can apply Theorem \ref{new-edge-coloringthm} to $H$ and $c$ defined above. In other words, it suffices to prove that $c(v)\leq \min\{\mathrm{deg}_H(v),\ k\}$ for every $v\in V$, and $S=\{v\in V\mid c(v)+\mu_H(v)>k\}$ is a stable set in $H$. We first show the former inequality. If $\mathrm{deg}_G(v)\leq k$, then we have $v\notin W$, which implies $\mathrm{deg}_H(v)=\mathrm{deg}_G(v)$, and hence we have
\begin{align}
    \min\{\mathrm{deg}_H(v),\ k\}=\min\{\mathrm{deg}_G(v),\ k\}\geq \min\{\mathrm{deg}_G(v),\ k-\mu_G(v)\}=c(v).\notag
\end{align}
Consider the case when $\mathrm{deg}_G(v)\geq k+1$. Then we have $v\in W$, which implies that $\delta_G^+(v)\leq \min\{\mathrm{deg}_G(v)-k,\ \mu_G(v)\}$ by Step 2. of the algorithm. Hence, we have
\begin{align}
    \mathrm{deg}_H(v)&=\mathrm{deg}_G(v)-\delta_G^{-}(v)-\delta_G^{+}(v)\geq \mathrm{deg}_G(v)-\delta_G^{-}(v)-\min\{\mathrm{deg}_G(v)-k,\ \mu_G(v)\}\notag\\ 
    &=\max\{k,\ \mathrm{deg}_G(v)-\mu_G(v)\}-\delta_G^{-}(v)\geq \min\{k,\ \mathrm{deg}_G(v)-\mu_G(v)\}-\delta_G^{-}(v)\notag\\
    &=c(v).\notag
\end{align}
Also, we have $k\geq \min\{\mathrm{deg}_G(v)-\mu_G(v),\ k\}-\delta_G^{-}(v)=c(v)$. Therefore, it holds that $\min\{\mathrm{deg}_H(v),\ k\}\geq c(v)$.

We next show that $S$ is a stable set in $H$. Suppose for a contradiction that there exists an edge $e_{uv}\in E(H)$ connecting $u,v\in S$. If $\mathrm{deg}_G(u)\leq k$, then we have
\begin{align}
    c(u)+\mu_H(u)=\min\{\mathrm{deg}_G(u),\ k-\mu_G(u)\}+\mu_H(u)\leq k-\mu_G(u)+\mu_H(u)\leq k,\notag
\end{align}
which contradicts $u\in S$. So we may assume that $u,v\in W$. Since $e_{uv}\in E(H)$, $e_{uv}$ is not oriented when the algorithm terminates. This implies that at least one of $u$ and $v$ is deleted from $W$ in the algorithm. Without loss of generality, we may assume that $v$ is deleted from $W$ (and deleted not later than $u$ if $u$ is also deleted from $W$ in the algorithm). Then we have $\delta_G^+(v)=\min\{\mathrm{deg}_G(v)-k,\ \mu_G(v)\}$. In the algorithm, $G[W]$ contains the undirected edge $e_{uv}$ while $v\in W$. This implies that $v$ cannot be an endpoint of a path $P$ taken in Step 1. of the algorithm because $e_{uv}\notin P$. Hence, when the algorithm terminates, we have $\delta_G^-(v)=\delta_G^+(v)=\min\{\mathrm{deg}_G(v)-k,\ \mu_G(v)\}$. Therefore, we have
\begin{align}
    c(v)+\mu_H(v)&=\min\{\mathrm{deg}_G(v)-\mu_G(v),\ k\}-\delta_G^-(v)+\mu_H(v)\notag\\
    &=\min\{\mathrm{deg}_G(v)-\mu_G(v),\ k\}-\min\{\mathrm{deg}_G(v)-k,\ \mu_G(v)\}+\mu_H(v)\notag\\
    &=\min\{\mathrm{deg}_G(v)-\mu_G(v),\ k\}+\max\{k-\mathrm{deg}_G(v),\ -\mu_G(v)\}\notag\\
    &+\mathrm{deg}_G(v)-\mathrm{deg}_G(v)+\mu_H(v)\notag\\
    &=\min\{\mathrm{deg}_G(v)-\mu_G(v),\ k\}+\max\{k,\ \mathrm{deg}_G(v)-\mu_G(v)\}\notag\\
    &-\mathrm{deg}_G(v)+\mu_H(v)\notag\\
    &=k+\mathrm{deg}_G(v)-\mu_G(v)-\mathrm{deg}_G(v)+\mu_H(v)\notag\\
    &\leq k,\notag
\end{align}
which contradicts $v\in S$.
\end{proof}

\section{Proof of the supermodular extension}
\label{chap:supermodular_extension}
In this section, we give a proof of Theorem \ref{thm:vizing_supermodular}. The proof combines the proof technique of Theorems \ref{cor:supermo} and \ref{thm:supermo} by Schrijver \cite{schrijver1985}, and that of Theorem \ref{vizing65} by Berge and Fournier \cite{fournier1991}, which is called ``sequential recoloring''. The outline of the proof of Theorem \ref{thm:vizing_supermodular} is based on that of Theorem \ref{vizing65} by Berge and Fournier \cite{fournier1991} and Theorem \ref{new-edge-coloringthm}. The proof starts with taking a proper coloring of a maximum subset, and finds a sequence of elements starting from an uncolored element which will be sequentially recolored, and also finds a bicolor sequence of elements starting from a neighborhood (in some sense) of the last element of the previous sequence, and finally recolors elements appropriately along with those sequences including the uncolored element, which will contradict the maximality of the subset. We first prepare some useful lemmas for the proof. Let $S\subseteq U$ be a subset and $\pi:S\rightarrow [k]$ a color assignment. Define $f_\pi:\mathcal{F}\rightarrow \mathbf{Z}$ as $f_{\pi}(X)=|X\setminus S|+|\pi(X\cap S)|$ for each $X\in \mathcal{F}$. The following claim shows the submodularity of $f_{\pi}$.
\begin{clm}
\label{clm:submo}
If $X_1,X_2,X_1\cup X_2,X_1\cap X_2\in \mathcal{F}$, then 
\begin{align}
\label{clm:submo:eq1}
    f_\pi(X_1)+f_\pi(X_2)\geq f_\pi(X_1\cup X_2)+f_\pi(X_1\cap X_2)
\end{align}
holds.
\end{clm}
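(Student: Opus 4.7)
The plan is to decompose $f_\pi$ into two pieces, show that one is modular and the other is submodular, and add the two inequalities. Write
\begin{align}
f_\pi(X)=h(X)+r(X),\quad \text{where } h(X):=|X\setminus S|,\ r(X):=|\pi(X\cap S)|.
\end{align}
Both sides of \eqref{clm:submo:eq1} make sense because $X_1\cup X_2$ and $X_1\cap X_2$ belong to $\mathcal{F}$ by hypothesis, so it suffices to prove the inequality with $f_\pi$ replaced by $h$ and by $r$ individually.

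First I would verify that $h$ is modular. Since $(X_1\cup X_2)\setminus S=(X_1\setminus S)\cup(X_2\setminus S)$ and $(X_1\cap X_2)\setminus S=(X_1\setminus S)\cap(X_2\setminus S)$, the inclusion--exclusion identity gives
\begin{align}
h(X_1\cup X_2)+h(X_1\cap X_2)=h(X_1)+h(X_2).
\end{align}
This needs no assumption on $\mathcal{F}$ and contributes equality to \eqref{clm:submo:eq1}.

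Next I would show submodularity of $r$. Setting $Y_i:=X_i\cap S$, we have $(X_1\cup X_2)\cap S=Y_1\cup Y_2$ and $(X_1\cap X_2)\cap S=Y_1\cap Y_2$. For any coloring $\pi$ the image map is monotone and additive on unions: $\pi(Y_1\cup Y_2)=\pi(Y_1)\cup\pi(Y_2)$, while $\pi(Y_1\cap Y_2)\subseteq \pi(Y_1)\cap\pi(Y_2)$. Hence
\begin{align}
r(X_1\cup X_2)+r(X_1\cap X_2)&=|\pi(Y_1)\cup\pi(Y_2)|+|\pi(Y_1\cap Y_2)|\\
&\leq |\pi(Y_1)\cup\pi(Y_2)|+|\pi(Y_1)\cap\pi(Y_2)|\\
&=|\pi(Y_1)|+|\pi(Y_2)|=r(X_1)+r(X_2).
\end{align}

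Adding the identity for $h$ to the inequality for $r$ yields \eqref{clm:submo:eq1}. There is no real obstacle: the whole content of the claim is the standard observation that the ``number of distinct colors appearing on a set'' is a submodular set function, and the complementary term $|X\setminus S|$ is modular. The hypothesis $X_1\cup X_2, X_1\cap X_2\in\mathcal{F}$ is used only to ensure that $f_\pi$ is evaluated at points of its domain.
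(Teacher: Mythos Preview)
Your proof is correct and follows essentially the same approach as the paper: both split $f_\pi$ into the modular term $|X\setminus S|$ (handled by inclusion--exclusion) and the submodular term $|\pi(X\cap S)|$ (handled via $\pi(Y_1\cup Y_2)=\pi(Y_1)\cup\pi(Y_2)$ and $\pi(Y_1\cap Y_2)\subseteq\pi(Y_1)\cap\pi(Y_2)$). The only cosmetic difference is that you name the two pieces $h$ and $r$ explicitly.
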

\begin{proof}
To show the submodular inequality, we first prepare the following equation:
\begin{align}
\label{clm:submo:eq2}
    |X_1\setminus S|+|X_2\setminus S|=|(X_1\cup X_2)\setminus S|+|(X_1\cap X_2)\setminus S|.
\end{align}
The following two equalities also hold:
\begin{itemize}
    \item $\pi(X_1\cap S)\cup \pi(X_2\cap S)=\pi((X_1\cup X_2)\cap S)$.
    \item $\pi(X_1\cap S)\cap \pi(X_2\cap S)\supseteq \pi((X_1\cap X_2)\cap S)$.
\end{itemize}
These equalities imply that 
\begin{align}
\label{clm:submo:eq3}
    |\pi(X_1\cap S)|+|\pi(X_2\cap S)|&=|\pi(X_1\cap S)\cup \pi(X_2\cap S)|+|\pi(X_1\cap S)\cap \pi(X_2\cap S)|\notag\\
    &\geq |\pi((X_1\cup X_2)\cap S)|+|\pi((X_1\cap X_2)\cap S)|.
\end{align}
By (\ref{clm:submo:eq2}) and (\ref{clm:submo:eq3}), we obtain (\ref{clm:submo:eq1}).
\end{proof}
A set $X\in \mathcal{F}$ is called \textit{$\pi$-satisfying} if $f_\pi(X)\geq g(X)$ holds. Moreover, $X$ is called \textit{$\pi$-tight} if $f_\pi(X)=g(X)$. Let $S'\subseteq U$ be a subset and $\pi':S'\rightarrow [k]$ a color assignment. Suppose that every $X\in \mathcal{F}$ is $\pi$-satisfying and $\pi'$-satisfying, where $f_{\pi'}$ is defined in a similar way to $f_{\pi}$. The following claim provides a sufficient condition to maintain the $\pi$-tightness when taking union.
\begin{clm}
\label{clm:tight}
Let $X_1,X_2,X_3\in \mathcal{F}$ be distinct sets satisfying the following three conditions:
\begin{itemize}
    \item $X_1\cap X_2\cap X_3\neq \emptyset$.
    \item $X_1$ and $X_2$ are $\pi$-tight.
    \item $X_1$ and $X_3$ are $\pi'$-tight.
\end{itemize}
Then at least one of the following two conditions holds:
\begin{itemize}
    \item $X_1\cup X_2\in \mathcal{F}$, and $X_1\cup X_2$ is $\pi$-tight. 
    \item $X_1\cup X_3\in \mathcal{F}$, and $X_1\cup X_3$ is $\pi'$-tight. 
\end{itemize}
\end{clm}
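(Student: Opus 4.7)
The plan is to deduce the conclusion from an uncrossing argument that combines submodularity of $f_\pi$ (Claim 2.2) with supermodularity of $g$, after first isolating a simple combinatorial fact: the definition of strongly triple-intersecting family guarantees closure under union/intersection for two of the three pairs in $\{(1,2),(2,3),(3,1)\}$, and since $(2,3)$ is the only pair not involving the index $1$, at least one of the two pairs must be either $(1,2)$ or $(3,1)$.

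First I would record this observation: there exist two distinct pairs among $(1,2),(2,3),(3,1)$ for which both the family-closure and the supermodular inequality hold; two distinct pairs cannot both equal $(2,3)$, so at least one of the chosen pairs is $(1,2)$ or $(3,1)$. I then split into the two (non-exclusive) cases.

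In the case that $(1,2)$ is one of the good pairs, we have $X_1\cup X_2,\,X_1\cap X_2\in\mathcal{F}$ together with $g(X_1)+g(X_2)\le g(X_1\cup X_2)+g(X_1\cap X_2)$. Since $X_1,X_2$ are $\pi$-tight and $X_1\cup X_2,X_1\cap X_2$ are $\pi$-satisfying, Claim \ref{clm:submo} gives
\begin{align}
g(X_1)+g(X_2)&=f_\pi(X_1)+f_\pi(X_2)\ge f_\pi(X_1\cup X_2)+f_\pi(X_1\cap X_2)\notag\\
&\ge g(X_1\cup X_2)+g(X_1\cap X_2)\ge g(X_1)+g(X_2).
\end{align}
Equality must therefore hold throughout, forcing $f_\pi(X_1\cup X_2)=g(X_1\cup X_2)$, i.e., $X_1\cup X_2$ is $\pi$-tight. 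Symmetrically, if $(3,1)$ is one of the good pairs, the same chain of inequalities, run with $\pi'$ and using that $X_1,X_3$ are $\pi'$-tight, shows $X_1\cup X_3\in\mathcal{F}$ and $X_1\cup X_3$ is $\pi'$-tight.

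There is no real obstacle here; the argument is a pure uncrossing. The only subtlety worth flagging is that one must invoke the strongly triple-intersecting hypothesis (rather than just intersecting supermodularity of $g$) precisely because we have no control over which two of the three pairs are closed, and the argument only needs the correct one of $(1,2)$ or $(3,1)$ to be among them — which the counting argument above guarantees.
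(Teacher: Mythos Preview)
Your proposal is correct and follows essentially the same argument as the paper: you use the strongly triple-intersecting supermodularity to conclude that at least one of the pairs $(1,2)$ or $(3,1)$ is ``good'' (the paper leaves this counting step implicit), and then run the identical uncrossing chain combining tightness, $\pi$-satisfiability of the union and intersection, Claim~\ref{clm:submo}, and the supermodular inequality to force equality throughout. The only cosmetic difference is the order in which you write the inequalities; the content is the same.
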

\begin{proof}
Since $X_1\cap X_2\cap X_3\neq \emptyset$, one of the following two conditions holds by intersecting 2/3-supermodularity of $g$:
\begin{enumerate}
\item $X_1\cup X_2,X_1\cap X_2\in \mathcal{F}\ \mathrm{and}\ g(X_1)+g(X_2)\leq g(X_1\cup X_2)+g(X_1\cap X_2)$.
\item $X_1\cup X_3,X_1\cap X_3\in \mathcal{F}\ \mathrm{and}\ g(X_1)+g(X_3)\leq g(X_1\cup X_3)+g(X_1\cap X_3)$.
\end{enumerate}
Consider the case when the condition 1 holds. Since $X_1$ and $X_2$ are $\pi$-tight, we have
\begin{align}
\label{clm:tight:eq2}
    f_\pi(X_1)+f_\pi(X_2)=g(X_1)+g(X_2)&\leq g(X_1\cup X_2)+g(X_1\cap X_2)\notag\\ &\leq f_\pi(X_1\cup X_2)+f_\pi(X_1\cap X_2)
\end{align}
(the last inequality holds because $X_1\cup X_2$ and $X_1\cap X_2$ are $\pi$-satisfying). By Claim \ref{clm:submo}, all of the inequalities in (\ref{clm:tight:eq2}) hold with equalities, which implies that $X_1\cup X_2$ is $\pi$-tight. Similarly, $X_1\cup X_3$ is $\pi'$-tight in the case when the condition 2 holds.
\end{proof}
We are now ready to prove Theorem \ref{thm:vizing_supermodular}.

\begin{proof}[Proof of Theorem \ref{thm:vizing_supermodular}]
Let $T_0\subseteq U$ be a maximum subset such that there exists a color assignment $\pi_0:T_0\rightarrow [k]$ satisfying
\begin{align}
\label{prfeq:visupermo}
    |X\setminus T_0|+|\pi_0(X\cap T_0)|\geq g(X)
\end{align}
for each $X\in \mathcal{F}$. Such a set $T_0$ does exist because $T_0=\emptyset$ satisfies (\ref{prfeq:visupermo}) for each $X\in \mathcal{F}$. Our aim is to show that $T_0=U$, which implies that (\ref{prfeq:visupermo}) coincides with (\ref{thmeq:visupermo}). Suppose for contradiction that $T_0\neq U$. Take $u_0\in U\setminus T_0$. Let $\mathcal{F}_0\subseteq \mathcal{F}$ be a family consisting of all $\pi_0$-tight sets $X\in \mathcal{F}$ with $u_0\in X$. Then, the number of maximal sets in $\mathcal{F}_0$ is at most two. Indeed, if distinct $X_1,X_2,X_3$ are maximal sets in $\mathcal{F}_0$, then one of $X_1\cup X_2$ and $X_1\cup X_3$ is a $\pi_0$-tight set in $\mathcal{F}$ by Claim \ref{clm:tight}, which contradicts the maximality of $X_1$. If the number of maximal sets in $\mathcal{F}_0$ is 0, i.e., $\mathcal{F}_0$ is the empty family, then extend the domain $T_0$ of $\pi_0$ to $T:=T_0\cup \{u_0\}$ and set $\pi_0(u_0):=\alpha$ for any color $\alpha$. For this extended color assignment $\pi_0$, each $X\in \mathcal{F}$ is $\pi_0$-satisfying, which contradicts the maximality of $T_0$. So we may assume that $\mathcal{F}_0$ is not empty. If $M$ is the unique maximal set in $\mathcal{F}_0$, then extend the domain $T_0$ of $\pi_0$ to $T$ and set $\pi_0(u_0):=\alpha$ for some $\alpha\notin \pi_0(M\cap T_0)$. For this extended color assignment $\pi_0$, each $X\in \mathcal{F}$ is $\pi_0$-satisfying because any $X\in \mathcal{F}_0$ satisfies $\alpha\notin \pi_0(M\cap T_0)\supseteq \pi_0(X\cap T_0)$, which contradicts the maximality of $T_0$. Hence, we may assume that $\mathcal{F}_0$ has the only two maximal sets $Y_0$ and $Z$. Suppose to the contrary that $Y_0,Z\in \mathcal{L}$. Then we have $Y_0\cup Z,Y_0\cap Z\in \mathcal{F}$ and $g(Y_0)+g(Z)\leq g(Y_0\cup Z)+g(Y_0\cap Z)$ because $Y_0$ and $Z$ are maximal sets in $\mathcal{F}_0$, and $u_0\in Y_0\cap Z$. Hence, we have
\begin{align}
\label{thm:visupermo:eq2}
    f_{\pi_0}(Y_0)+f_{\pi_0}(Z)=g(Y_0)+g(Z)\leq g(Y_0\cup Z)+g(Y_0\cap Z)\leq f_{\pi_0}(Y_0\cup Z)+f_{\pi_0}(Y_0\cap Z).
\end{align}
By Claim \ref{clm:submo}, all of the inequalities in (\ref{thm:visupermo:eq2}) hold with equality, which implies that $Y_0\cup Z$ is $\pi_0$-tight. Hence, we have $Y_0\cup Z\in \mathcal{F}_0$, which contradicts the maximality of $Y_0$. So we may assume without loss of generality that $Y_0\notin \mathcal{L}$. Let $\{(Y_0,u_0),\ldots,(Y_l,u_l)\}$ be a maximal sequence of pairs consisting of a set $Y_i\in \mathcal{F}$ and an element $u_i\in Z$ satisfying the following six conditions (see Figure \ref{fig:bicolorchain}):
\begin{enumerate}
    \item $u_0,\ldots,u_l$ are distinct.
    \item $u_1,\ldots,u_l\in T_0$.
    \item For each $i\in [l]$, define $\alpha_i=\pi_0(u_i)$, $T_i=T\setminus \{u_i\}$, and define $\pi_i:T_i\rightarrow [k]$ as follows:
    \begin{align}
    \pi_i(t)=\left\{
    \begin{array}{ll}
    \alpha_{j+1} & (t=u_j\ \mathrm{for\ some\ }0\leq j\leq i-1),\\
    \pi_0(t) & (\mathrm{otherwise}).
    \end{array}
    \right.\notag
    \end{align}
    Define $\mathcal{F}_i$ as a family consisting of all $\pi_i$-tight sets $X\in \mathcal{F}$ with $u_i\in X$ for each $i\in [l]$. Then $Y_i$ is a maximal set in $\mathcal{F}_i$ distinct from $Z$ for each $0\leq i\leq l$.
    \item $\alpha_{i+1}\notin \pi_0(Y_i\cap T_0)$ for each $0\leq i\leq l-1$.
    \item $\alpha_{i+1}\neq \alpha_{j+1}$ for each $0\leq i<j\leq l-1$ with $u_i\in Y_j$.
    \item $Y_0,\ldots,Y_l\notin \mathcal{L}$.
\end{enumerate}
\begin{figure}[tb]
  \centering
  \includegraphics[width=8cm]{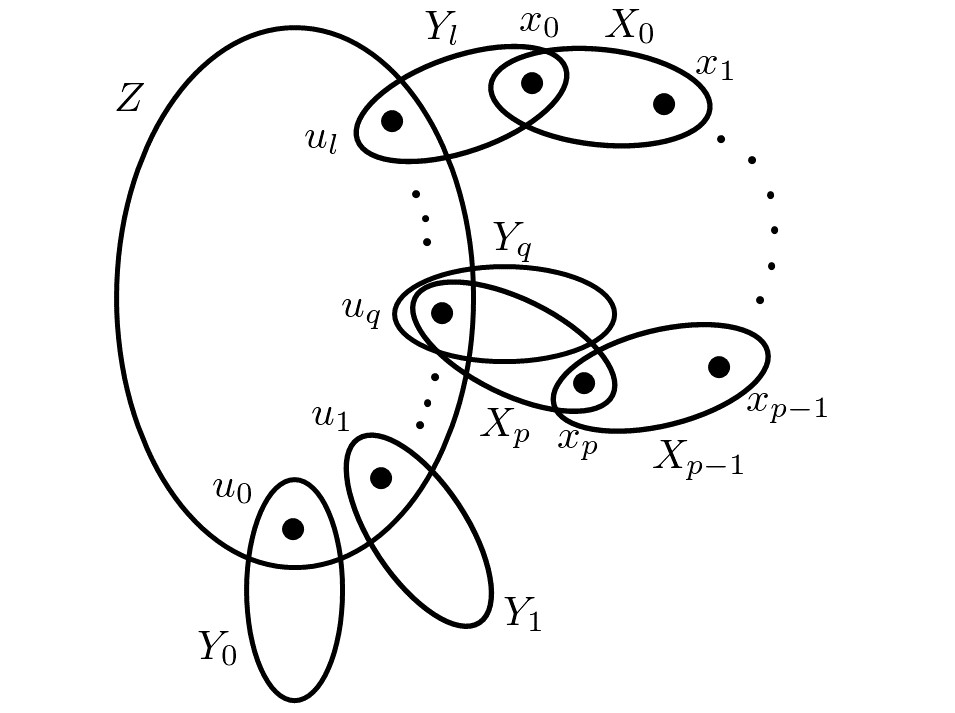}
  \caption{A maximal sequence $\{(Y_0,u_0),\ldots,(Y_l,u_l)\}$ and a maximal sequence $\{x_0,\ldots,x_p\}$.}
  \label{fig:bicolorchain}
\end{figure}
Such a maximal sequence does exist because a sequence $\{(Y_0,u_0)\}$ satisfies all of the above conditions. We have the following facts concerning this maximal sequence.
\begin{clm}
\label{clm:z_in_f}
$Z$ is a maximal set in $\mathcal{F}_i$ for each $i\in [l]$.
\end{clm}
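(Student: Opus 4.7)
The plan is to reduce both membership of $Z$ in $\mathcal{F}_i$ and maximality of $Z$ in $\mathcal{F}_i$ to the corresponding statements about $\mathcal{F}_0$, which are already established. The key observation is that $Z$ contains all of $u_0,u_1,\ldots,u_l$ by condition~1 of the maximal sequence, and $\pi_i$ is obtained from $\pi_0$ by a cyclic shift that moves the color $\alpha_j=\pi_0(u_j)$ from $u_j$ to $u_{j-1}$ for each $j=1,\ldots,i$. Because this shift takes place entirely inside $Z$, it should leave $f_\pi(Z)$ unchanged, and the same should hold for any set $W$ containing $Z$.

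Concretely, I would first prove the following stability statement: for every $X\in\mathcal{F}$ with $\{u_0,u_1,\ldots,u_i\}\subseteq X$, we have $f_{\pi_i}(X)=f_{\pi_0}(X)$. Since $T_i=T_0\cup\{u_0\}\setminus\{u_i\}$ with $u_0\notin T_0$ and $u_i\in T_0$ (the latter by condition~2, using $i\geq 1$), the identity $X\setminus T_i=((X\setminus T_0)\setminus\{u_0\})\cup\{u_i\}$ yields $|X\setminus T_i|=|X\setminus T_0|$. For the color term, setting $R:=(X\cap T_0)\setminus\{u_1,\ldots,u_i\}$, one has $X\cap T_i=\{u_0,u_1,\ldots,u_{i-1}\}\cup R$, and since $\pi_i(u_{j-1})=\alpha_j=\pi_0(u_j)$ for $1\leq j\leq i$ while $\pi_i$ agrees with $\pi_0$ on $R$, both $\pi_i(X\cap T_i)$ and $\pi_0(X\cap T_0)$ equal $\{\alpha_1,\ldots,\alpha_i\}\cup\pi_0(R)$ as sets of colors; in particular their cardinalities match.

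Applying this statement with $X=Z$ yields $f_{\pi_i}(Z)=f_{\pi_0}(Z)=g(Z)$, so $Z$ is $\pi_i$-tight, and together with $u_i\in Z$ this gives $Z\in\mathcal{F}_i$. For maximality, suppose for contradiction that some $W\in\mathcal{F}_i$ strictly contains $Z$. Then $W\supseteq Z\supseteq\{u_0,\ldots,u_l\}$, so in particular $u_0,u_1,\ldots,u_i\in W$, and the stability statement applied to $W$ gives $f_{\pi_0}(W)=f_{\pi_i}(W)=g(W)$. Since also $u_0\in W$, this forces $W\in\mathcal{F}_0$, contradicting the maximality of $Z$ in $\mathcal{F}_0$. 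The main obstacle here is purely bookkeeping: one must keep straight which $u_j$'s lie in $T_0$ versus $T_i$ and verify that the shifted colors $\pi_i(\{u_0,\ldots,u_{i-1}\})=\{\alpha_1,\ldots,\alpha_i\}$ coincide, as a set, with the original colors $\pi_0(\{u_1,\ldots,u_i\})$. No supermodularity, $g$-laminarity of $\mathcal{L}$, or Claim~\ref{clm:tight} is needed at this step --- the argument is essentially a direct calculation that the cyclic recoloring preserves $f_\pi(\cdot)$ on sets containing the full rotation path.
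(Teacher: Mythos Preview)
Your proposal is correct and follows essentially the same approach as the paper's proof: both argue that because the recoloring $\pi_0\mapsto\pi_i$ only shifts colors among $u_0,\ldots,u_i\in Z$, the value $f_\pi$ is preserved on any set containing $\{u_0,\ldots,u_i\}$, so $Z$ remains tight and any strictly larger $\pi_i$-tight set would already have been a $\pi_0$-tight set in $\mathcal{F}_0$. The paper asserts the equalities $|Z\setminus T_i|=|Z\setminus T_0|$ and $|\pi_i(Z\cap T_i)|=|\pi_0(Z\cap T_0)|$ (and likewise for $M_i\supsetneq Z$) without unpacking them; your ``stability statement'' just makes this bookkeeping explicit.
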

\begin{proof}
We first show that $Z\in \mathcal{F}_i$ for each $i\in [l]$. Since $u_0,\ldots,u_{i}\in Z$, we have $|Z\setminus T_i|=|Z\setminus T_0|$ and $|\pi_i(Z\cap T_i)|=|\pi_0(Z\cap T_0)|$, which implies that $Z$ is $\pi_i$-tight because $Z$ is $\pi_0$-tight. Combined with $u_i\in Z$, this implies that $Z\in \mathcal{F}_i$. Suppose to the contrary that $Z$ is not a maximal set in $\mathcal{F}_i$ for some $i\in [l]$. Then there exists a maximal set $M_i\supsetneq Z$ in $\mathcal{F}_i$. Since $u_0,\ldots,u_{i}\in M_i$, and since $M_i$ is $\pi_i$-tight, $M_i$ is also $\pi_0$-tight, which contradicts that $Z$ is a maximal set in $\mathcal{F}_0$.
\end{proof}
\begin{clm}
\label{clm:pi_i-sat}
Every $X\in \mathcal{F}$ is $\pi_i$-satisfying for each $0\leq i\leq l$.
\end{clm}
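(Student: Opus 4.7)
The plan is to prove the claim by induction on $i$. The base case $i=0$ is immediate, since property (\ref{prfeq:visupermo}) of $T_0$ is precisely the statement that every $X \in \mathcal{F}$ is $\pi_0$-satisfying. For the inductive step, assume every $X$ is $\pi_{i-1}$-satisfying, and examine the transition from $\pi_{i-1}$ to $\pi_i$: the two colorings differ only at $u_{i-1}$ (which passes from uncolored to color $\alpha_i$) and $u_i$ (which passes from color $\alpha_i$ to uncolored). A short case analysis on whether $u_{i-1}, u_i \in X$ yields $f_{\pi_i}(X) \ge f_{\pi_{i-1}}(X)$ in every configuration except one ``bad'' configuration: $u_{i-1} \in X$, $u_i \notin X$, and $\alpha_i$ appears in $\pi_{i-1}(X \cap T_{i-1})$ through some element $u \ne u_i$; in that case $f_{\pi_i}(X) = f_{\pi_{i-1}}(X) - 1$.

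Suppose for contradiction some $X \in \mathcal{F}$ is not $\pi_i$-satisfying. Then $X$ lies in the bad configuration and is $\pi_{i-1}$-tight, hence $X \in \mathcal{F}_{i-1}$. Applying Claim~\ref{clm:tight} with $\pi = \pi' = \pi_{i-1}$ (valid by the inductive hypothesis), the argument already used for $\mathcal{F}_0$ shows that $\mathcal{F}_{i-1}$ has at most two maximal sets; since $Y_{i-1}$ and $Z$ both lie in $\mathcal{F}_{i-1}$ (by condition~3 of the sequence and Claim~\ref{clm:z_in_f}) and are distinct, these are exactly the two maximal sets, so $X \subseteq Y_{i-1}$ or $X \subseteq Z$.

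In the subcase $X \subseteq Y_{i-1}$, take an element $u \in X \cap T_{i-1}$ with $\pi_{i-1}(u) = \alpha_i$. If $u = u_j$ for some $0 \le j \le i-2$, then $\alpha_{j+1} = \alpha_i$ with $u_j \in Y_{i-1}$, violating condition~5 of the sequence. Otherwise $u \in T_0$ with $\pi_0(u) = \alpha_i$, so $\alpha_i \in \pi_0(Y_{i-1} \cap T_0)$, violating condition~4.

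In the subcase $X \subseteq Z$, the inclusion is proper (since $u_i \in Z \setminus X$), and $X \ne Y_{i-1}$ (otherwise $Y_{i-1} \subseteq Z$ would force $Y_{i-1} = Z$ by maximality, contradicting condition~3). Apply strongly triple-intersecting supermodularity of $g$ to the three distinct sets $X, Y_{i-1}, Z$, all of which contain $u_{i-1}$; two of the three pairs yield the supermodular inequality and closure of $\mathcal{F}$ under union and intersection. If the pair $(Y_{i-1}, Z)$ is among them, then by Claim~\ref{clm:submo} and tightness, $Y_{i-1} \cup Z$ is $\pi_{i-1}$-tight and lies in $\mathcal{F}_{i-1}$, contradicting the maximality of $Y_{i-1}$ or $Z$. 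Otherwise $(X, Y_{i-1})$ is among them, and the same reasoning places $X \cup Y_{i-1}$ in $\mathcal{F}_{i-1}$, forcing $X \subseteq Y_{i-1}$ by the maximality of $Y_{i-1}$ and thereby reducing to the previous subcase. The main obstacle is exactly this subcase $X \subseteq Z$, where conditions~4 and~5 do not apply directly; it is here that the strongly triple-intersecting hypothesis, rather than mere intersecting supermodularity, is essential, via the closure property on the triple $\{X, Y_{i-1}, Z\}$.
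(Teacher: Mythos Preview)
Your proof is correct and follows essentially the same approach as the paper's: both argue by induction on $i$, isolate the single dangerous configuration $u_{i-1}\in X$, $u_i\notin X$, $X$ $\pi_{i-1}$-tight, and then use the strongly triple-intersecting hypothesis on the triple $\{X,Y_{i-1},Z\}$ to force $X\subseteq Y_{i-1}$, after which conditions~4 and~5 of the sequence give the contradiction. The only differences are organizational: you first establish that $Y_{i-1}$ and $Z$ are the unique maximal sets of $\mathcal{F}_{i-1}$ (reproving inline what the paper later records as Claim~\ref{clm:maximal_in_f_i}) and then split into $X\subseteq Y_{i-1}$ versus $X\subseteq Z$, whereas the paper splits into $X=Z$, $X=Y_i$, and $X\neq Z,Y_i$ and invokes Claim~\ref{clm:tight} directly in the last case.
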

\begin{proof}
We prove the claim by induction on $i$. The case $i=0$ is trivial. Suppose that every $X\in \mathcal{F}$ is $\pi_i$-satisfying for some $0\leq i\leq l-1$. Then it suffices to show that every $X\in \mathcal{F}$ is $\pi_{i+1}$-satisfying. Take $X\in \mathcal{F}$. We now consider separately the following four cases depending on whether $u_i\in X$ or not, and whether $u_{i+1}\in X$ or not:
\begin{itemize}
    \item Consider the case when $u_i,u_{i+1}\notin X$. Then we have $|X\setminus T_i|=|X\setminus T_{i+1}|$ and $|\pi_i(X\cap T_i)|=|\pi_{i+1}(X\cap T_{i+1})|$, which implies that $X$ is $\pi_{i+1}$-satisfying because $X$ is $\pi_{i}$-satisfying.
    \item Consider the case when $u_i,u_{i+1}\in X$. Then we have $|X\setminus T_i|=|X\setminus T_{i+1}|$ and $|\pi_i(X\cap T_i)|=|\pi_{i+1}(X\cap T_{i+1})|$, which implies that $X$ is $\pi_{i+1}$-satisfying because $X$ is $\pi_{i}$-satisfying.
    \item Consider the case when $u_i\notin X$ and $u_{i+1}\in X$. Then we have $|X\setminus T_i|+1=|X\setminus T_{i+1}|$ and $|\pi_i(X\cap T_i)|\leq |\pi_{i+1}(X\cap T_{i+1})|+1$, which implies that $X$ is $\pi_{i+1}$-satisfying because $X$ is $\pi_{i}$-satisfying.
    \item Consider the case when $u_i\in X$ and $u_{i+1}\notin X$. Then we have $|X\setminus T_i|-1=|X\setminus T_{i+1}|$ and $|\pi_i(X\cap T_i)|\leq |\pi_{i+1}(X\cap T_{i+1})|$. Hence, if $X$ is not $\pi_i$-tight, then $X$ is $\pi_{i+1}$-satisfying. Consider the case when $X$ is $\pi_i$-tight. If $X=Z$, then $u_{i+1}\in Z$ contradicts $u_{i+1}\notin X$. If $X=Y_i$, then since $\alpha_{i+1}\notin \pi_0(Y_i\cap T_0)$ and $\alpha_{j+1}\neq \alpha_{i+1}$ for every $0\leq j<i$ with $u_j\in Y_i$, we have $\alpha_{i+1}\notin \pi_i(Y_i\cap T_i)=\pi_i(X\cap T_i)$, which implies that $|\pi_i(X\cap T_i)|+1=|\pi_{i+1}(X\cap T_{i+1})|$, and hence $X$ is $\pi_{i+1}$-satisfying. Consider the case when $X\neq Z,Y_i$. By Claim \ref{clm:z_in_f}, $Z$ is $\pi_i$-tight. Then by Claim \ref{clm:tight}, one of $Y_i\cup X$ and $Y_i\cup Z$ is a $\pi_i$-tight set in $\mathcal{F}$. Since $Y_i$ and $Z$ are distinct maximal sets in $\mathcal{F}_i$ (by Claim \ref{clm:z_in_f}), $Y_i\cup Z$ is not a $\pi_i$-tight set in $\mathcal{F}$, which implies that $Y_i\cup X$ is a $\pi_i$-tight set in $\mathcal{F}$. Hence, we have $Y_i\supseteq Y_i\cup X$ by the maximality of $Y_i$, which implies that $Y_i\supseteq X$. Thus, we have $\alpha_{i+1}\notin \pi_i(Y_i\cap T_i)\supseteq \pi_i(X\cap T_i)$, which implies that $X$ is $\pi_{i+1}$-satisfying.
\end{itemize}
\end{proof}
\begin{clm}
\label{clm:maximal_in_f_i}
$Z$ and $Y_i$ are the only maximal sets in $\mathcal{F}_i$ for each $i\in [l]$.
\end{clm}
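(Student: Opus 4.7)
The plan is to argue by contradiction: suppose that $\mathcal{F}_i$ admits some maximal set $M$ distinct from both $Y_i$ and $Z$. Combining Claim \ref{clm:z_in_f} (which gives that $Z$ is a maximal set in $\mathcal{F}_i$) with condition~3 in the definition of the maximal sequence (which gives that $Y_i$ is a maximal set in $\mathcal{F}_i$ distinct from $Z$), I would then have three pairwise distinct maximal $\pi_i$-tight sets $M, Y_i, Z$, each containing $u_i$, so in particular $M\cap Y_i\cap Z\supseteq \{u_i\}\neq \emptyset$.

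The central step is an application of Claim \ref{clm:tight} with both color assignments taken to be the same, namely $\pi_i:T_i\rightarrow [k]$. This is legitimate because Claim \ref{clm:pi_i-sat} guarantees that every $X\in \mathcal{F}$ is $\pi_i$-satisfying. Setting $(X_1,X_2,X_3):=(M,Y_i,Z)$, Claim \ref{clm:tight} yields that at least one of $M\cup Y_i$ and $M\cup Z$ lies in $\mathcal{F}$ and is $\pi_i$-tight. Since the corresponding union still contains $u_i$, it belongs to $\mathcal{F}_i$. The maximality of $M$ then forces $M$ to contain $Y_i$ (respectively $Z$), while the maximality of $Y_i$ (respectively $Z$) forces the reverse inclusion, hence $M=Y_i$ (respectively $M=Z$), contradicting the choice of $M$.

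The only subtle point I expect is orchestrating the hypotheses of Claim \ref{clm:tight} correctly. That claim is stated with potentially different color assignments $\pi$ and $\pi'$, so one has to observe that the diagonal choice $\pi=\pi'=\pi_i$ is permitted; moreover $M$ must play the role of the distinguished set $X_1$, so that the two candidate tight supersets produced by the claim are $M\cup Y_i$ and $M\cup Z$ rather than the useless $Y_i\cup Z$ (which would not contradict anything, since $Y_i\cup Z$ is not forced to lie in $\mathcal{F}_i$ via $M$'s maximality). Once this routing is in place, the contradiction is immediate and requires no further computation.
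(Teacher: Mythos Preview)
Your proof is correct and follows essentially the same route as the paper: both invoke Claim~\ref{clm:tight} with $\pi=\pi'=\pi_i$ (legitimized by Claim~\ref{clm:pi_i-sat}) on the three pairwise distinct maximal sets containing $u_i$, and derive a $\pi_i$-tight set in $\mathcal{F}_i$ strictly containing one of them. The only cosmetic difference is that the paper places $Z$ (rather than your $M$) in the role of $X_1$, obtaining $Z\cup Y_i$ or $Z\cup X$ as the tight union---so your worry that $Y_i\cup Z$ would be ``useless'' is unfounded: it too lies in $\mathcal{F}_i$ and immediately contradicts the maximality of $Z$ (and of $Y_i$).
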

\begin{proof}
If there exists a maximal set $X\neq Z,Y_i$ in $\mathcal{F}_i$, then one of $Z\cup Y_i$ and $Z\cup X$ is a $\pi_i$-tight set in $\mathcal{F}$ by Claim \ref{clm:tight} and Claim \ref{clm:pi_i-sat}, which contradicts the maximality of $Z$ in $\mathcal{F}_i$.
\end{proof}
\begin{clm}
\label{clm:y_i=y_j}
$Y_i=Y_j$ holds for every $0\leq i\neq j\leq l$ with $u_i\in Y_j$.
\end{clm}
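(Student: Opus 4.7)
The plan is to assume without loss of generality that $i<j$ and to derive a contradiction from $Y_i\neq Y_j$ by applying Claim~\ref{clm:tight} to the triple $(X_1,X_2,X_3):=(Z,Y_i,Y_j)$ with color assignments $\pi:=\pi_i$ and $\pi':=\pi_j$. The three sets are pairwise distinct: $Z\neq Y_i$ and $Z\neq Y_j$ by condition~3 of the maximal sequence, while $Y_i\neq Y_j$ is the contradiction hypothesis. Moreover, $u_i\in Z\cap Y_i\cap Y_j$, because $u_i\in Z$ by condition~1 of the sequence, $u_i\in Y_i$ since $Y_i\in\mathcal{F}_i$, and $u_i\in Y_j$ by hypothesis; hence $X_1\cap X_2\cap X_3\neq\emptyset$.

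To verify the remaining hypotheses of Claim~\ref{clm:tight}, note that $Y_i\in\mathcal{F}_i$ is $\pi_i$-tight and $Y_j\in\mathcal{F}_j$ is $\pi_j$-tight directly by the definitions of those families. The set $Z$ is both $\pi_i$-tight and $\pi_j$-tight by Claim~\ref{clm:z_in_f} (combined with the base case $Z\in\mathcal{F}_0$ when $i=0$, which holds by the initial setup where $Z$ was chosen as one of the maximal sets of $\mathcal{F}_0$). Finally, Claim~\ref{clm:pi_i-sat} guarantees that every $X\in\mathcal{F}$ is $\pi_i$-satisfying and $\pi_j$-satisfying. Claim~\ref{clm:tight} therefore yields that either $Z\cup Y_i$ is a $\pi_i$-tight set in $\mathcal{F}$, or $Z\cup Y_j$ is a $\pi_j$-tight set in $\mathcal{F}$.

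Each alternative will contradict the maximality results already recorded in Claims~\ref{clm:z_in_f} and~\ref{clm:maximal_in_f_i}. In the first case, $u_i\in Y_i\subseteq Z\cup Y_i$ places $Z\cup Y_i$ in $\mathcal{F}_i$; since $Z$ and $Y_i$ are distinct maximal sets of $\mathcal{F}_i$ they must be incomparable, so $Z\cup Y_i\supsetneq Y_i$ violates the maximality of $Y_i$ in $\mathcal{F}_i$. The second case is symmetric: $u_j\in Y_j\subseteq Z\cup Y_j$ forces $Z\cup Y_j\in\mathcal{F}_j$, and incomparability of the distinct maximal sets $Z$ and $Y_j$ in $\mathcal{F}_j$ yields $Z\cup Y_j\supsetneq Y_j$, contradicting the maximality of $Y_j$. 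The only nontrivial step is the choice of the triple $(Z,Y_i,Y_j)$ and of the two different color assignments $\pi_i,\pi_j$ used simultaneously; once this pairing is identified, both branches supplied by Claim~\ref{clm:tight} collide directly with the maximality scaffolding already established for each $\mathcal{F}_i$.
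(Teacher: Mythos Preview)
Your proof is correct and follows essentially the same approach as the paper: apply Claim~\ref{clm:tight} to the triple $(Z,Y_i,Y_j)$ with the two assignments $\pi_i,\pi_j$, using $u_i\in Z\cap Y_i\cap Y_j$, and then observe that either outcome $Z\cup Y_i\in\mathcal{F}_i$ or $Z\cup Y_j\in\mathcal{F}_j$ contradicts maximality. The paper phrases the contradiction as violating the maximality of $Z$ rather than of $Y_i$ (resp.\ $Y_j$), but this is immaterial since $Z$ and $Y_i$ are incomparable; your write-up simply spells out more of the verification details than the paper's one-line version.
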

\begin{proof}
Suppose to the contrary that $Y_i\neq Y_j$ for some $0\leq i\neq j\leq l$ with $u_i\in Y_j$. Then by Claim \ref{clm:tight}, Claim \ref{clm:pi_i-sat}, and Claim \ref{clm:maximal_in_f_i}, we have $Z\cup Y_i\in \mathcal{F}_i$ or $Z\cup Y_j\in \mathcal{F}_j$, which contradicts the maximality of $Z$.
\end{proof}
\begin{clm}
\label{clm:color}
There exists a color $\alpha_{l+1}\notin \pi_0(Y_l\cap T_0)$ satisfying $\alpha_{l+1}\neq \alpha_{i+1}$ for every $0\leq i<l$ with $u_i\in Y_l$.
\end{clm}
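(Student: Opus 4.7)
The plan is a counting argument: I would show that the set of \emph{forbidden} colors
\[
F:=\pi_0(Y_l\cap T_0)\cup\{\alpha_{i+1}\mid 0\le i<l,\ u_i\in Y_l\}
\]
has cardinality at most $k-1$, which immediately yields a valid $\alpha_{l+1}\in[k]\setminus F$.

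The first step is to re-express $F$ in terms of $\pi_l$ rather than $\pi_0$. Since $\pi_l$ agrees with $\pi_0$ on $T_l\setminus\{u_0,\ldots,u_{l-1}\}$, while on each $u_j$ with $0\le j\le l-1$ it takes the value $\alpha_{j+1}$, a direct set-algebra computation (splitting $Y_l\cap T_0$ into $(Y_l\cap T_0)\setminus\{u_1,\ldots,u_l\}$ and $\{u_j\in Y_l:1\le j\le l\}$, and tracking that $u_0\notin T_0$ while $u_l\notin T_l$) yields
\[
F \;=\; \pi_l(Y_l\cap T_l)\cup\{\alpha_j\mid 1\le j\le l,\ u_j\in Y_l\}.
\]
Hence $|F|\le |\pi_l(Y_l\cap T_l)|+|\{1\le j\le l:u_j\in Y_l\}|$.

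Next I would bound each summand. By $\pi_l$-tightness of $Y_l$ (since $Y_l\in\mathcal{F}_l$),
\[
|\pi_l(Y_l\cap T_l)| \;=\; g(Y_l)-|Y_l\setminus T_l| \;\le\; g(Y_l)-1,
\]
where the strict decrement comes from $u_l\in Y_l$ together with $u_l\notin T_l$, giving $|Y_l\setminus T_l|\ge 1$. For the second summand, the elements $u_1,\ldots,u_l\in Z$ are distinct, so $|\{1\le j\le l:u_j\in Y_l\}|\le|Y_l\cap Z|$. By condition~3 of the maximal sequence together with Claim~\ref{clm:z_in_f}, $Y_l$ and $Z$ are distinct maximal sets of $\mathcal{F}_l$; hence $Y_l\not\subseteq Z\not\subseteq Y_l$, and $Z\in\mathcal{F}$ qualifies in the definition of $D_\mathcal{F}(Y_l)$, so $|Y_l\cap Z|\le D_\mathcal{F}(Y_l)$. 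Combining these with the hypothesis $Y_l\notin\mathcal{L}$, i.e., $g(Y_l)+D_\mathcal{F}(Y_l)\le k$, I conclude $|F|\le g(Y_l)-1+D_\mathcal{F}(Y_l)\le k-1$, which finishes the proof.

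The step I expect to be the most delicate is the set-algebra rewriting of $F$ in terms of $\pi_l$: one has to account cleanly for the asymmetric roles of $u_0$ (which lies in $T$ but not in $T_0$) and $u_l$ (which lies in $T_0$ but not in $T_l$), as well as for the coordinated shift $\alpha_j\mapsto\alpha_{j+1}$ at the intermediate elements $u_j$ with $1\le j\le l-1$. The rest is a tight inequality exploiting the $-1$ from $u_l\in Y_l\setminus T_l$ to upgrade the non-strict hypothesis $g(Y_l)+D_\mathcal{F}(Y_l)\le k$ to the strict bound $|F|<k$.
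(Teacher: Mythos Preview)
Your argument is correct and follows essentially the same counting strategy as the paper: bound the forbidden colors using $\pi_l$-tightness of $Y_l$, the inclusion $\{u_j\in Y_l\}\subseteq Y_l\cap Z$ together with $|Y_l\cap Z|\le D_{\mathcal F}(Y_l)$, and the hypothesis $Y_l\notin\mathcal L$. The only difference is cosmetic: the paper reaches the same inequality via the intermediate estimate $|\pi_l(Y_l\cap T_l)|\ge|\pi_0(Y_l\cap T_0)|-1$ (which it justifies through Claim~\ref{clm:y_i=y_j}), whereas your direct set identity $F=\pi_l(Y_l\cap T_l)\cup\{\alpha_j:1\le j\le l,\ u_j\in Y_l\}$ bypasses that step.
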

\begin{proof}
Let $\tilde{Y}_l=\{u_i\in Y_l\mid 0\leq i\leq l\}$. It suffices to show that $|\pi_0(Y_l\cap T_0)|+|\tilde{Y}_l|\leq k$. By Claim \ref{clm:y_i=y_j}, we have $\alpha_{i+1}\notin \pi_0(Y_i\cap T_0)=\pi_0(Y_l\cap T_0)$ for each $0\leq i<l$ with $u_i\in Y_l$, and we have $\alpha_{i+1}\neq \alpha_{j+1}$ for every $0\leq i\neq j<l$ with $u_i,u_j\in Y_l$, which implies that 
\begin{align}
\label{clm:color:eq1}
    |\pi_l(Y_l\cap T_l)|&=|\pi_l((Y_l\setminus \tilde{Y}_l)\cap T_l)|+|\pi_l(\tilde{Y}_l\cap T_l)|=|\pi_0((Y_l\setminus \tilde{Y}_l)\cap T_0)|+|\tilde{Y}_l\cap T_l|\notag\\&\geq |\pi_0((Y_l\setminus \tilde{Y}_l)\cap T_0)|+|\tilde{Y}_l\cap T_0|-1\geq |\pi_0(Y_l\cap T_0)|-1
\end{align}
Since $Y_l$ is $\pi_l$-tight, we have
\begin{align}
\label{clm:color:eq2}
    |Y_l\setminus T_l|+|\pi_l(Y_l\cap T_l)|=g(Y_l)
\end{align}
Since $Y_l\notin \mathcal{L}$, we have
\begin{align}
\label{clm:color:eq3}
    g(Y_l)+D_{\mathcal{F}}(Y_l)\leq k.
\end{align}
Since $u_l\in Y_l\setminus T_l$, we have
\begin{align}
\label{clm:color:eq4}
    |\tilde{Y}_l|\leq |Y_l\cap Z|\leq D_{\mathcal{F}}(Y_l)\leq D_{\mathcal{F}}(Y_l)+|Y_l\setminus T_l|-1.
\end{align}
By (\ref{clm:color:eq1}), (\ref{clm:color:eq2}), (\ref{clm:color:eq3}), and (\ref{clm:color:eq4}), we have
\begin{align}
    |\pi_0(Y_l\cap T_0)|+|\tilde{Y}_l|&\leq |\pi_0(Y_l\cap T_0)|+D_{\mathcal{F}}(Y_l)+|Y_l\setminus T_l|-1\notag\\&\leq |\pi_l(Y_l\cap T_l)|+D_{\mathcal{F}}(Y_l)+|Y_l\setminus T_l|=g(Y_l)+D_{\mathcal{F}}(Y_l)\leq k.\notag
\end{align}
\end{proof}

Take a color $\alpha_{l+1}$ in Claim \ref{clm:color}. If $\alpha_{l+1}\notin \pi_0(Z\cap T_0)$, then extend the domain $T_l$ of $\pi_l$ to $T=T_l\cup \{u_l\}$ and set $\pi_l(u_l):=\alpha_{l+1}$. For this extended color assignment $\pi_l$, each $X\in \mathcal{F}$ is $\pi_l$-satisfying because the only maximal sets $Z$ and $Y_l$ in $\mathcal{F}_l$ satisfy $\alpha_{l+1}\notin \pi_0(Z\cap T_0)=\pi_l(Z\cap T_l)$ and $\alpha_{l+1}\notin \pi_l(Y_l\cap T_l)$, which contradicts the maximality of $T_0$. So we may assume that $\alpha_{l+1}\in \pi_0(Z\cap T_0)$. 

Consider the case when there exists an element $u_{l+1}\in Z\cap T_0$ distinct from $u_0,\ldots,u_l$ and satisfying $\pi_0(u_{l+1})=\alpha_{l+1}$. Let $T_{l+1}=T\setminus \{u_{l+1}\}$. Define the following color assignment $\pi_{l+1}:T_{l+1}\rightarrow [k]$:
\begin{align}
    \pi_{l+1}(t)=\left\{
    \begin{array}{ll}
    \alpha_{i+1} & (t=u_i\ \mathrm{for\ some\ }0\leq i\leq l),\\
    \pi_0(t) & (\mathrm{otherwise}).
    \end{array}
    \right.\notag
    \end{align}
By a similar argument to the proof of Claim \ref{clm:pi_i-sat}, we see that every $X\in \mathcal{F}$ is $\pi_{l+1}$-satisfying. Let $\mathcal{F}_{l+1}\subseteq \mathcal{F}$ be a family consisting of all $\pi_{l+1}$-tight sets $X\in \mathcal{F}$ with $u_{l+1}\in X$. By a similar argument to the proof of Claim \ref{clm:z_in_f}, we see that $Z$ is a maximal set in $\mathcal{F}_{l+1}$. If $Z$ is the unique maximal set in $\mathcal{F}_{l+1}$, then extend the domain $T_{l+1}$ of $\pi_{l+1}$ to $T$ and set $\pi_{l+1}(u_{l+1})=\alpha$ with $\alpha\notin \pi_0(Z\cap T_0)$. For this extended color assignment $\pi_{l+1}$, every set $X\in \mathcal{F}$ is $\pi_{l+1}$-satisfying, which contradicts the maximality of $T_0$. Hence, we may assume that there exists a maximal set $Y_{l+1}\neq Z$ in $\mathcal{F}_{l+1}$. By the maximality of the sequence $\{(Y_0,u_0),\ldots,(Y_l,u_l)\}$, we have $Y_{l+1}\in \mathcal{L}$. If $Z\in \mathcal{L}$, then since every $X\in \mathcal{F}$ is $\pi_{l+1}$-satisfying, and since $Y_{l+1}$ and $Z$ are $\pi_{l+1}$-tight, and since $\mathcal{L}$ is a $g$-laminar family, we have $Y_{l+1}\cup Z,Y_{l+1}\cap Z\in \mathcal{F}$ and 
\begin{align}
\label{thm:visupermo:eq3}
    f_{\pi_{l+1}}(Y_{l+1})+f_{\pi_{l+1}}(Z)&=g(Y_{l+1})+g(Z)\leq g(Y_{l+1}\cup Z)+g(Y_{l+1}\cap Z)\notag\\&\leq f_{\pi_{l+1}}(Y_{l+1}\cup Z)+f_{\pi_{l+1}}(Y_{l+1}\cap Z).
\end{align}
By Claim \ref{clm:submo}, all of the inequalities in (\ref{thm:visupermo:eq3}) hold with equalities, which implies that $Y_{l+1}\cup Z\in \mathcal{F}_{l+1}$, a contradiction. Hence, we may assume that $Z\notin \mathcal{L}$. We now redefine $Z,Y_0,u_0,T_0,\pi_0,\mathcal{F}_0$ to be $Y_{l+1},Z,u_{l+1},T_{l+1},\pi_{l+1},\mathcal{F}_{l+1}$, respectively. Then we again start from the beginning of this proof with redefined $Z,Y_0,u_0,T_0,\pi_0,\mathcal{F}_0$. Since redefined $Z$ satisfies $Z\in \mathcal{L}$, we may assume that $Y_{l+1}\notin \mathcal{L}$ in the redefined setting, which contradicts the maximality of the sequence $\{(Y_0,u_0),\ldots,(Y_l,u_l)\}$. Hence, we may assume that there exist no elements $u_{l+1}\in Z\cap T_0$ distinct from $u_0,\ldots,u_l$ and satisfying $\pi_0(u_{l+1})=\alpha_{l+1}$.

Take a color $\beta\notin \pi_0(Z\cap T_0)$. If $\beta\notin \pi_l(Y_l\cap T_l)$, then extend the domain $T_{l}$ of $\pi_{l}$ to $T$ and set $\pi_{l}(u_{l}):=\beta$. For this extended color assignment $\pi_{l}$, each $X\in \mathcal{F}$ is $\pi_{l}$-satisfying because of Claim~\ref{clm:maximal_in_f_i}, which contradicts the maximality of $T_0$. Hence, we may assume that $\beta\in \pi_l(Y_l\cap T_l)$. Let $x_0\in Y_l$ be an element with $\pi_l(x_0)=\beta$. 
We now construct a ``bicolor chain'' starting at $x_0$ and consisting of elements with colors $\beta$ and $\alpha_{l+1}$. 
Let $\{x_0,\ldots,x_p\}$ be a maximal sequence consisting of distinct elements in $U\setminus Z$ and satisfying the following three conditions (see Figure \ref{fig:bicolorchain}):
\begin{enumerate}
    \item For each $0\leq i\leq p$, $\pi_l(x_i)=\beta$ if $i$ is even, and $\pi_l(x_i)=\alpha_{l+1}$ if $i$ is odd.
    \item For each $0\leq i\leq p$, define the following color assignment $\pi_l^i:T_l\rightarrow [k]$:
    \begin{align}
    \pi_l^i(t)=\left\{
    \begin{array}{lll}
    \alpha_{l+1} & (t=x_j\ \mathrm{for\ even\ }0\leq j\leq i),\\
    \beta & (t=x_j\ \mathrm{for\ odd\ }0\leq j\leq i),\\
    \pi_l(t) & (\mathrm{otherwise}).
    \end{array}
    \right.\notag
    \end{align}
    Then there exists the unique $X_i\in \mathcal{F}$ which is not $\pi_l^i$-satisfying for each $0\leq i\leq p-1$.
    \item $x_{i+1}\in X_i$ for each $0\leq i\leq p-1$.
\end{enumerate}
Such a sequence does exist because the sequence $\{x_0\}$ satisfies the above conditions.
\begin{clm}
\label{clm:x_iinx_i}
$x_{i}\in X_i$ holds for every $0\leq i\leq p-1$.
\end{clm}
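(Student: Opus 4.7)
My plan is to induct on $i$, exploiting the fact that $\pi_l^i$ differs from $\pi_l^{i-1}$ (with $\pi_l^{-1}:=\pi_l$ in the base case) only at the single element $x_i$. The central observation is: for any $X\in\mathcal{F}$ with $x_i\notin X$, we have $\pi_l^i(X\cap T_l)=\pi_l^{i-1}(X\cap T_l)$ and hence $f_{\pi_l^i}(X)=f_{\pi_l^{i-1}}(X)$, so $X$ is $\pi_l^i$-satisfying if and only if it is $\pi_l^{i-1}$-satisfying.

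For the base case $i=0$, Claim \ref{clm:pi_i-sat} ensures every $X\in\mathcal{F}$ is $\pi_l$-satisfying. If $x_0\notin X_0$, then the observation above (applied with $\pi_l^{-1}=\pi_l$) forces $X_0$ to be $\pi_l^0$-satisfying as well, contradicting the defining property of $X_0$ in condition 2 of the bicolor chain.

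For the inductive step $i\geq 1$, I would argue by contradiction. Assuming $x_i\notin X_i$, the observation shows that $X_i$ is not $\pi_l^{i-1}$-satisfying either. By the uniqueness clause in condition 2 applied at index $i-1$, we must then have $X_i=X_{i-1}$. But condition 3 applied at index $i-1$ says precisely that $x_i\in X_{i-1}$, and so $x_i\in X_{i-1}=X_i$, contradicting $x_i\notin X_i$.

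The whole argument is essentially bookkeeping about which colorings agree on which sets, so I do not anticipate a serious technical obstacle. The only mildly delicate point is that the base case must be handled separately because there is no $\pi_l^{-1}$, and one invokes Claim \ref{clm:pi_i-sat} together with $\pi_l$ in its place.
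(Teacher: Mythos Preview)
Your proof is correct and essentially identical to the paper's: both handle $i=0$ by noting that $\pi_l^0$ agrees with $\pi_l$ off $x_0$, and for $i\geq 1$ both use that $x_i\notin X_i$ would force $X_i$ to fail $\pi_l^{i-1}$ as well, whence $X_i=X_{i-1}$ by uniqueness and $x_i\in X_{i-1}$ by condition~3. One minor remark: although you frame the argument as induction, you never actually invoke the inductive hypothesis, so it is really just a case split on $i=0$ versus $i\geq 1$, exactly as the paper presents it.
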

\begin{proof}
If $x_0\notin X_0$, then $\pi_l^0(X_0\cap T_l)=\pi_l(X_0\cap T_l)$, which implies that $X_0$ is $\pi_l^0$-satisfying, a contradiction. Otherwise, we have $x_0\in X_0$. Suppose to the contrary that $x_i\notin X_i$ for some $1\leq i\leq p-1$. Then $X_i$ is not $\pi_l^{i-1}$-satisfying because $X_i$ is not $\pi_l^{i}$-satisfying. This implies that $X_i=X_{i-1}$. Hence we have $x_{i}\in X_{i-1}=X_i$, which contradicts $x_i\notin X_i$.
\end{proof}
Let $\tilde{\pi}_l^p:T\rightarrow [k]$ be an assignment of colors defined below:
\begin{align}
    \tilde{\pi}_l^p(t)=\left\{
    \begin{array}{ll}
    \beta & (t=u_l),\\
    \pi_l^p(t) & (\mathrm{otherwise}).
    \end{array}
    \right.\notag
    \end{align}
If every $X\in \mathcal{F}$ is $\pi_l^p$-satisfying, then we can derive a contradiction by the following fact.
\begin{clm}
\label{clm:tildepil_i}
If every $X\in \mathcal{F}$ is $\pi_l^p$-satisfying, then every $X\in \mathcal{F}$ is also $\tilde{\pi}_l^p$-satisfying.
\end{clm}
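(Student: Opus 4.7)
The plan is to separate the verification of $\tilde{\pi_l^p}$-satisfaction into the case $u_l \notin X$ (which is immediate) and the case $u_l \in X$, and to reduce the latter to a structural sub-claim about $\pi_l^p$-tight sets containing $u_l$.

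For $u_l \notin X$, $\tilde{\pi_l^p}$ agrees with $\pi_l^p$ on $X \cap T$ and $X \setminus T = X \setminus T_l$, so $f_{\tilde{\pi_l^p}}(X) = f_{\pi_l^p}(X) \geq g(X)$ by hypothesis. For $u_l \in X$, a direct calculation gives
\[
f_{\tilde{\pi_l^p}}(X) = f_{\pi_l^p}(X) - 1 + [\beta \notin \pi_l^p(X \cap T_l)],
\]
where $[\cdot]$ is the $0/1$ indicator. Consequently $X$ is $\tilde{\pi_l^p}$-satisfying unless $X$ is $\pi_l^p$-tight \emph{and} $\beta \in \pi_l^p(X \cap T_l)$. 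The content of the proof reduces to the sub-claim: no $X \in \mathcal{F}$ with $u_l \in X$ is simultaneously $\pi_l^p$-tight and satisfies $\beta \in \pi_l^p(X \cap T_l)$.

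For $X = Z$, the bicolor-chain modifications $\pi_l \to \pi_l^p$ only touch elements of $U \setminus Z$ by chain condition~1, so $\pi_l^p(Z \cap T_l) = \pi_l(Z \cap T_l)$; and the earlier modifications $\pi_0 \to \pi_l$ on $Z \cap T_l$ only substitute colors in $\{\alpha_1, \dots, \alpha_l\}$, each of which equals some $\pi_0(u_i)$ with $u_i \in Z \cap T_0$ and hence lies in $\pi_0(Z \cap T_0)$. By the choice $\beta \notin \pi_0(Z \cap T_0)$, these are all distinct from $\beta$; the color $\alpha_{l+1}$ is also in $\pi_0(Z \cap T_0)$ in the case currently under consideration, so likewise $\alpha_{l+1} \neq \beta$. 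Hence $\beta \notin \pi_l^p(Z \cap T_l)$, ruling out the bad configuration at $X = Z$.

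For $X \neq Z$, the plan is to invoke Claim~\ref{clm:tight} with $\pi := \pi_l^p$, $\pi' := \pi_l$, and $(X_1, X_2, X_3) := (Z, X, Y_l)$, handling the degenerate case $X = Y_l$ separately. The required tightness is in place: $Z, X$ are $\pi_l^p$-tight (by the standing assumption and by the observation in the previous paragraph), while $Z, Y_l$ are $\pi_l$-tight (Claim~\ref{clm:z_in_f} and the construction of $Y_l$); every set in $\mathcal{F}$ is $\pi_l$-satisfying (Claim~\ref{clm:pi_i-sat}) and $\pi_l^p$-satisfying (hypothesis); and $Z \cap X \cap Y_l \ni u_l$. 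The conclusion is that $Z \cup X$ is $\pi_l^p$-tight in $\mathcal{F}$ or $Z \cup Y_l$ is $\pi_l$-tight in $\mathcal{F}$; the latter contradicts the maximality of $Z$ and $Y_l$ in $\mathcal{F}_l$ (Claim~\ref{clm:maximal_in_f_i}), so $Z \cup X$ is $\pi_l^p$-tight. Iterating $X \mapsto Z \cup X$ yields a $\pi_l^p$-tight set $M \supsetneq Z$ still containing $u_l$ and still witnessing a $\beta$-colored element in $M \setminus Z$.

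The main obstacle is extracting the final contradiction at this terminal $M$. I expect it to come from comparing the tightness identities $f_{\pi_l^p}(M) = g(M)$ and $f_{\pi_l^p}(Z) = g(Z)$ against the bound $g(Z) + D_{\mathcal{F}}(Z) \leq k$ implied by $Z \notin \mathcal{L}$, using that the extra $\beta$-colored element in $M \setminus Z$ forces a gap in color counts incompatible with that bound; the parallel $X = Y_l$ sub-case should be dispatched by a direct color-count using $Y_l \notin \mathcal{L}$ together with the exclusions $\alpha_{l+1} \notin \pi_l(Y_l \cap T_l)$ and $\alpha_{i+1} \neq \alpha_{l+1}$ for $u_i \in Y_l$ from Claim~\ref{clm:color}. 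The delicate part is packaging the color-counting cleanly so that the $\beta$-presence in $M$ converts into a violation of one of the laminarity exclusions $Z \notin \mathcal{L}$ or $Y_l \notin \mathcal{L}$.
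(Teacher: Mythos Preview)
Your reduction is correct up to the point where you obtain that $Z\cup X$ is $\pi_l^p$-tight (no iteration is needed: since $\beta\in\pi_l^p(X\cap T_l)$ but $\beta\notin\pi_l^p(Z\cap T_l)$, already $X\not\subseteq Z$, so $Z\cup X\supsetneq Z$). The gap is in how you propose to finish.

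Your plan is to invoke $Z\notin\mathcal{L}$, but this assumption is \emph{not} available at this stage of the proof. The only laminarity exclusion secured is $Y_0,\dots,Y_l\notin\mathcal{L}$ (condition~6 of the maximal sequence); the assumption ``$Z\notin\mathcal{L}$'' appearing earlier in the paper lives inside a sub-case that terminates in a restart, and after that restart one in fact has $Z\in\mathcal{L}$. So the inequality $g(Z)+D_{\mathcal{F}}(Z)\le k$ you intend to use simply need not hold. The same objection applies to your sketch for $X=Y_l$: although $Y_l\notin\mathcal{L}$ \emph{is} true, the argument you outline is not carried out, and the paper dispatches $Y_l$ by a different and much shorter route---observing that $x_0\in Y_l$ forces $\tilde{\pi_l^p}(Y_l\cap T)=\pi_l(Y_l\cap T_l)\cup\{\alpha_{l+1}\}$, so the new color $\alpha_{l+1}$ exactly compensates for the loss of one unit in $|Y_l\setminus T|$.

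The missing idea for the generic case is a \emph{color-swap} argument rather than a $\mathcal{L}$-bound. Since $\alpha_{l+1}\in\pi_0(Z\cap T_0)=\pi_l(Z\cap T_l)=\pi_l^p(Z\cap T_l)$ and $\beta\in\pi_l^p(X\cap T_l)$, the set $Z\cup X$ carries both colors under $\pi_l^p$. But $\pi_l$ and $\pi_l^p$ differ only by swapping $\alpha_{l+1}\leftrightarrow\beta$ on $\{x_0,\dots,x_p\}$, so $\pi_l((Z\cup X)\cap T_l)\subseteq\pi_l^p((Z\cup X)\cap T_l)$, whence $f_{\pi_l}(Z\cup X)\le f_{\pi_l^p}(Z\cup X)=g(Z\cup X)$. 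Thus $Z\cup X$ is $\pi_l$-tight, contains $u_l$, and strictly contains $Z$ (witnessed by the $\beta$-colored element), contradicting the maximality of $Z$ in $\mathcal{F}_l$. This is the step your proposal is missing.
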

\begin{proof}
Suppose for contradiction that every $X\in \mathcal{F}$ is $\pi_l^p$-satisfying, and that some $\tilde{X}_p\in \mathcal{F}$ is not $\tilde{\pi}_l^p$-satisfying. Since $Z$ is $\pi_l^p$-satisfying, and since $\beta\notin \pi_l(Z\cap T_l)=\pi_l^p(Z\cap T_l)$, $Z$ is $\tilde{\pi}_l^p$-satisfying. Since $Y_l$ is $\pi_l$-satisfying, and since $\tilde{\pi}_l^p(Y_l\cap T)=\pi_l(Y_l\cap T_l)\cup \{\alpha_{l+1}\}$, $Y_l$ is $\tilde{\pi}_l^p$-satisfying. Hence, we have $\tilde{X}_p\neq Z,Y_l$. Since $Z$ is $\pi_l$-tight (by Claim \ref{clm:z_in_f}), $Z$ is also $\pi_l^p$-tight. Since $\tilde{X}_p$ is $\pi_l^p$-satisfying and not $\tilde{\pi}_l^p$-satisfying, $\tilde{X}_p$ is $\pi_l^p$-tight, and $u_l\in \tilde{X}_p$, and $\beta\in \pi_l^p(\tilde{X}_p\cap T_l)$. Then, by Claim \ref{clm:tight} one of the following conditions holds:
\begin{itemize}
    \item $Z\cup Y_l\in \mathcal{F}$, and $Z\cup Y_l$ is $\pi_l$-tight.
    \item $Z\cup \tilde{X}_p\in \mathcal{F}$, and $Z\cup \tilde{X}_p$ is $\pi_l^p$-tight.
\end{itemize}
As the former contradicts the maximality of $Y_l$ by Claim \ref{clm:maximal_in_f_i}, we may assume that the latter holds. Since $\alpha_{l+1}\in \pi_l(Z\cap T_l)=\pi_l^p(Z\cap T_l)$, we have $\alpha_{l+1},\beta\in \pi_l^p((Z\cup \tilde{X}_p)\cap T_l)$. Hence, we have $\pi_l((Z\cup \tilde{X}_p)\cap T_l)\subseteq \pi_l^p((Z\cup \tilde{X}_p)\cap T_l)$. Since $Z\cup \tilde{X}_p$ is $\pi_l^p$-tight, this implies that $Z\cup \tilde{X}_p$ is $\pi_l$-tight and that $\alpha_{l+1},\beta\in \pi_l((Z\cup \tilde{X}_p)\cap T_l)$. Hence, we have $Z\subsetneq Z\cup \tilde{X}_p$ because $\beta\notin \pi_l(Z\cap T_l)$, contradicting the maximality of $Z$ in $\mathcal{F}_l$.
\end{proof}
By Claim \ref{clm:tildepil_i}, it suffices to consider the case when some $X\in \mathcal{F}$ is not $\pi_l^p$-satisfying.
\begin{clm}
\label{clm:notsat1}
The number of $X\in \mathcal{F}$ which is not $\pi_l^p$-satisfying is at most one.
\end{clm}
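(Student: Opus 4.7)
My plan is to argue by contradiction: suppose two distinct sets $A,B\in\mathcal{F}$ are both not $\pi_l^p$-satisfying, and derive a contradiction by combining the strongly triple-intersecting supermodular property of $g$ with the maximality of $Z$ in $\mathcal{F}_l$.

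First I would observe that $Z$ is $\pi_l^p$-tight: since the chain $x_0,\ldots,x_p$ lies entirely in $U\setminus Z$, the coloring $\pi_l^p$ coincides with $\pi_l$ on $Z\cap T_l$, and $Z$ is $\pi_l$-tight by Claim~\ref{clm:z_in_f}. In particular $A\neq Z\neq B$. Next I would analyze the effect of the swap from $\pi_l$ to $\pi_l^p$: because every $X\in\mathcal{F}$ is $\pi_l$-satisfying by Claim~\ref{clm:pi_i-sat} and the swap only changes the colors on chain elements, any $\pi_l^p$-not-satisfying set $X$ must satisfy $|\pi_l^p(X\cap T_l)|<|\pi_l(X\cap T_l)|$, which by a short case check on the four indicators ``$X$ contains some even-indexed $x_j$'', ``$X$ contains some odd-indexed $x_j$'', ``some non-chain element of $X\cap T_l$ has color $\alpha_{l+1}$'', and ``some non-chain element of $X\cap T_l$ has color $\beta$'' forces $X$ to be $\pi_l$-tight, to contain a chain element, and to lose exactly one of the colors in $\{\alpha_{l+1},\beta\}$. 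Applied to $A$ and $B$, this shows both are $\pi_l$-tight and each contains some $x_j$.

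The key step is then to apply the strongly triple-intersecting supermodular property of $g$ to the triple $\{A,B,Z\}$. For this I need $A\cap B\cap Z\neq\emptyset$, which I would deduce from the above case analysis: the non-chain element of $A\cap T_l$ whose color ``balances'' the one lost by the swap in the $\pi_l$-tight equation for $A$ must lie in $Z$, using both the assumption established earlier in the proof that no element of $Z\cap T_0$ distinct from $u_0,\ldots,u_l$ has $\pi_0$-color $\alpha_{l+1}$, and the fact that $\beta\notin\pi_l(Z\cap T_l)$; the same for $B$ pins down a common element in $A\cap B\cap Z$. Once this is established, strongly triple-intersecting supermodularity gives that at least two of the three pairs from $\{(A,B),(A,Z),(B,Z)\}$ satisfy the supermodular closure, so at least one of $(A,Z)$ or $(B,Z)$ does; say $(A,Z)$. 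Combining the $g$-supermodular inequality with Claim~\ref{clm:submo} (submodularity of $f_{\pi_l}$) and the tightness of both $A$ and $Z$, I would obtain that $A\cup Z$ is $\pi_l$-tight; since $u_l\in Z\subseteq A\cup Z$ this yields $A\cup Z\in\mathcal{F}_l$, so the maximality of $Z$ in $\mathcal{F}_l$ from Claim~\ref{clm:maximal_in_f_i} forces $A\subseteq Z$, contradicting the fact that $A$ contains a chain element lying in $U\setminus Z$.

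I expect the main obstacle to be the intermediate step $A\cap B\cap Z\neq\emptyset$. While the tightness and color-loss analysis strongly suggests that the balancing non-chain element of $A$ (and of $B$) lives in $Z$, making this fully rigorous requires careful bookkeeping of how colors $\alpha_{l+1}$ and $\beta$ are distributed between $Z\cap T_l$ and $(U\setminus Z)\cap T_l$ under $\pi_l$; in degenerate configurations one may need to fall back on applying the same triple argument with $X_{p-1}$ (using $x_p\in A\cap B\cap X_{p-1}$) or with $Y_l$ in place of $Z$, together with a separate treatment of the case where one of $A,B$ coincides with $X_{p-1}$.
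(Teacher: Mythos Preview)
Your main line of attack---reaching a contradiction via the triple $\{A,B,Z\}$---has a genuine gap at the step $A\cap B\cap Z\neq\emptyset$. Your own analysis shows that a $\pi_l^p$-violating set $A$ must be $\pi_l$-tight, must contain some chain element, and must contain a non-chain element of $T_l$ whose $\pi_l$-color is the ``other'' color in $\{\alpha_{l+1},\beta\}$. But this balancing element need not lie in $Z$. If the balancing color is $\beta$, then since $\beta\notin\pi_l(Z\cap T_l)$ the element is \emph{certainly outside} $Z$. If the balancing color is $\alpha_{l+1}$, the assumption you cite (``no element of $Z\cap T_0$ distinct from $u_0,\ldots,u_l$ has $\pi_0$-color $\alpha_{l+1}$'') constrains $\alpha_{l+1}$-colored elements \emph{inside} $Z$; it says nothing about such elements in $U\setminus Z$, and there is no reason to exclude them. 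Even granting $A\cap Z\neq\emptyset$ and $B\cap Z\neq\emptyset$ separately, you would still need a \emph{common} element, which your argument does not supply.

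Your fallbacks do not close the gap either. Using $Y_l$ requires $x_0\in A\cap B$, which you only get when $p=0$ (and indeed the paper handles $p=0$ exactly this way, via the triple $\{Y_l,W_1,W_2\}$). Using $X_{p-1}$ for $p\geq 1$ does give $x_p\in A\cap B\cap X_{p-1}$ (after checking neither $A$ nor $B$ equals $X_{p-1}$), and one application of the triple argument yields that, say, $S_1:=X_{p-1}\cup A$ is $\pi_l$-tight---but this alone contradicts nothing, since $X_{p-1}$ is not maximal in any family. The paper's proof for $p\geq 1$ proceeds from this point by \emph{iterating} the triple argument down the chain: from $\{S_1,X_{p-1},X_{p-2}\}$ (common element $x_{p-1}$) one gets a $\pi_l$-tight $S_2\supsetneq X_{p-2}$, then from $\{S_2,X_{p-2},X_{p-3}\}$ a $\pi_l$-tight $S_3\supsetneq X_{p-3}$, and so on until $S_p\supsetneq X_0$; only then does the final triple $\{S_p,X_0,Y_l\}$ (common element $x_0$) contradict the maximality of $Y_l$ in $\mathcal{F}_l$. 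This descent along the bicolor chain is the missing idea in your proposal.
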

\begin{proof}
Suppose for contradiction that distinct $W_1,W_2\in \mathcal{F}$ are not $\pi_l^p$-satisfying. Then $W_1$ and $W_2$ are $\pi_l$-tight. Since $Y_l$ is $\pi_l^p$-satisfying, we have $Y_l\neq W_1,W_2$. Consider the case when $p=0$. In this case, since $W_1$ and $W_2$ are $\pi_l$-satisfying and not $\pi_l^0$-satisfying, we have $x_0\in W_1,W_2$ and $\alpha_{l+1}\in \pi_l(W_1\cap T_l),\pi_l(W_2\cap T_l)$. Then, by Claim \ref{clm:tight} one of $Y_l\cup W_1$ and $Y_l\cup W_2$ is a $\pi_l$-tight set in $\mathcal{F}$. Since $\alpha_{l+1}\in \pi_l(W_1\cap T_l)\setminus \pi_l(Y_l\cap T_l),\pi_l(W_2\cap T_l)\setminus \pi_l(Y_l\cap T_l)$ implies $Y_l\not\supseteq W_1,W_2$, this contradicts the maximality of $Y_l$. 

Consider the case when $p\geq 1$. If $W_1$ is not $\pi_l^{p-1}$-satisfying, then $W_1=X_{p-1}$, which implies that $x_{p-1},x_p\in W_1$ by Claim \ref{clm:x_iinx_i}. Since $W_1$ is $\pi_l$-satisfying, $W_1$ is also $\pi_l^p$-satisfying, a contradiction. Hence, we may assume that $W_1$ and $W_2$ are $\pi_l^{p-1}$-satisfying. This implies that $x_p\in W_1\cap W_2$ because $W_1$ and $W_2$ are not $\pi_l^{p}$-satisfying. Since $X_{p-1},W_1,W_2$ are $\pi_l$-tight, and since $x_p\in X_{p-1}\cap W_1\cap W_2$, one of $X_{p-1}\cup W_1$ and $X_{p-1}\cup W_2$ is a $\pi_l$-tight set in $\mathcal{F}$ by Claim \ref{clm:tight}. We may assume without loss of generality that $X_{p-1}\cup W_1$ is a $\pi_l$-tight set in $\mathcal{F}$. Let $S_1=X_{p-1}\cup W_1$. Since $W_1$ is $\pi_l^{p-1}$-satisfying and not $\pi_l^p$-satisfying, and since $x_p\in W_1$, we have $x_{p-1}\notin W_1$ and $\pi_l^p(x_p)\in \pi_l^p((W_1\setminus \{x_p\})\cap T_l)$. Take $w\in W_1$ such that $w\neq x_{p-1},x_{p}$ and $\pi_l^p(w)=\pi_l^p(x_p)$. If $W_1\subseteq X_{p-1}$, then we have $w\in X_{p-1}$, which contradicts that $X_{p-1}$ is not $\pi_l^{p-1}$-satisfying. Otherwise, we have $W_1\not\subseteq X_{p-1}$. This implies that $S_1\supsetneq X_{p-1}$. If $X_{p-1}\subseteq X_{p-2}$ or $X_{p-2}\subseteq X_{p-1}$ holds, then we have $x_p\in X_{p-2}$ or $x_{p-2}\in X_{p-1}$, which contradicts that $X_{p-2}$ is not $\pi_l^{p-2}$-satisfying, or contradicts that $X_{p-1}$ is not $\pi_l^{p-1}$-satisfying. Otherwise, we have $X_{p-1}\not\subseteq X_{p-2}\not\subseteq X_{p-1}$. Then $S_1,X_{p-1},X_{p-2}$ are distinct $\pi_l$-tight sets. Hence, one of $X_{p-2}\cup S_1$ and $X_{p-2}\cup X_{p-1}$ is a $\pi_l$-tight set in $\mathcal{F}$ by Claim \ref{clm:tight}. Let $S_2$ be this $\pi_l$-tight set. Then $S_2\supsetneq X_{p-2}$ holds because of $X_{p-1}\not\subseteq X_{p-2}$. We also have $X_{p-2}\not\subseteq X_{p-3}\not\subseteq X_{p-2}$ by a similar argument as above. Then $S_2,X_{p-2},X_{p-3}$ are distinct $\pi_l$-tight sets. By Claim \ref{clm:tight}, this implies that one of $X_{p-3}\cup S_2$ and $X_{p-3}\cup X_{p-2}$ is a $\pi_l$-tight set in $\mathcal{F}$. Let $S_3$ be this $\pi_l$-tight set. Then $S_3\supsetneq X_{p-3}$ holds because of $X_{p-2}\not\subseteq X_{p-3}$. By repeating the similar arguments, we finally obtain a $\pi_l$-tight set $S_p$ in $\mathcal{F}$ satisfying $S_p\supsetneq X_0$. By Claim \ref{clm:x_iinx_i}, we have $x_0\in S_p\cap X_0\cap Y_l$. Since $x_1\in X_0$ and $\pi_l(x_1)=\alpha_{l+1}$, we have $x_1\in X_0\setminus Y_l$, which implies that $Y_l\not\supseteq X_0$. By these facts, $S_p,X_0,Y_l$ are distinct $\pi_l$-tight sets satisfying $S_p\cap X_0\cap Y_l\neq \emptyset$. By Claim \ref{clm:tight}, this implies that one of $Y_l\cup S_p$ and $Y_l\cup X_0$ is a $\pi_l$-tight set in $\mathcal{F}$, contradicting the maximality of $Y_l$.
\end{proof}
By Claim \ref{clm:notsat1}, it suffices to consider the case when there exists the unique set $X_p\in \mathcal{F}$ which is not $\pi_l^p$-satisfying. By the proof of Claim \ref{clm:notsat1}, we have $x_p\in X_p$. Since $X_p$ is $\pi_l$-satisfying and not $\pi_l^p$-satisfying, we have $\alpha_{l+1},\beta\in \pi_l(X_p\cap T_l)$ and $\{\alpha_{l+1},\beta\}\not\subseteq \pi_l^p(X_p\cap T_l)$. Hence, there exists an element $x_{p+1}\in X_p\cap T_l$ such that $\pi_l(x_{p+1})\in \{\alpha_{l+1},\beta\}$ and $\pi_l(x_{p+1})\neq \pi_l(x_p)$. If $x_{p+1}=x_i$ holds for some $0\leq i\leq p-1$, then we have $\alpha_{l+1},\beta\in \pi_l^p(X_p\cap T_l)$, a contradiction. Otherwise, $x_{p+1}\neq x_0,\ldots,x_{p-1}$. Then, by the maximality of the sequence $\{x_0,\ldots,x_p\}$, we have $x_{p+1}\in Z$. Since $\beta\notin \pi_l^p(Z\cap T_l)$, this implies that $\pi_l(x_{p+1})=\pi_l^p(x_{p+1})=\alpha_{l+1}$. Recall that there exist no elements $t\in Z\cap T_0$ distinct from $u_0,\ldots,u_l$ and satisfying $\pi_0(t)=\alpha_{l+1}$. If $x_{p+1}\neq u_0,\ldots,u_{l-1}$, then we have $\pi_0(x_{p+1})=\pi_l(x_{p+1})=\alpha_{l+1}$, a contradiction. Otherwise, we have $x_{p+1}=u_q$ for some $0\leq q\leq l-1$. Let $\pi_l^{p+1}:T_l\rightarrow [k]$ be an assignment of colors defined below:
\begin{align}
    \pi_l^{p+1}(t)=\left\{
    \begin{array}{ll}
    \alpha_{l+1} & (t=x_i\ \mathrm{for\ even\ }0\leq i\leq p+1),\\
    \beta & (t=x_i\ \mathrm{for\ odd\ }0\leq i\leq p+1),\\
    \pi_l(t) & (\mathrm{otherwise}).
    \end{array}
    \right.\notag
    \end{align}
\begin{clm}
\label{clm:pilp1-sat}
Every $X\in \mathcal{F}$ is $\pi_l^{p+1}$-satisfying.
\end{clm}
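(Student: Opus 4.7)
The plan is to verify $\pi_l^{p+1}$-satisfyingness of every $X \in \mathcal{F}$ by tracking the single-element change from $\pi_l^p$, which only recolors $x_{p+1}$ from $\alpha_{l+1}$ to $\beta$. When $x_{p+1} \notin X$, we have $f_{\pi_l^{p+1}}(X) = f_{\pi_l^p}(X)$, and since $x_{p+1} \in X_p$ forces $X \neq X_p$, $X$ is already $\pi_l^p$-satisfying. The remaining case $x_{p+1} \in X$ splits into $X = X_p$, $X = Z$, and $X \neq X_p, Z$.

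For $X = X_p$, I combine $\pi_l(x_{p+1}) = \alpha_{l+1} \neq \pi_l(x_p) = \beta$ with condition~1 of the sequence to deduce $p$ is even; by Claim~\ref{clm:x_iinx_i}, $x_p \in X_p$, so $\pi_l^{p+1}(x_p) = \alpha_{l+1}$ and $\pi_l^{p+1}(x_{p+1}) = \beta$. The transition $\pi_l \to \pi_l^{p+1}$ only swaps $\alpha_{l+1}$ and $\beta$ within $\{x_0, \ldots, x_{p+1}\}$, so both colors remain in $\pi_l^{p+1}(X_p \cap T_l)$, giving $|\pi_l^{p+1}(X_p \cap T_l)| \geq |\pi_l(X_p \cap T_l)|$ and hence $f_{\pi_l^{p+1}}(X_p) \geq f_{\pi_l}(X_p) \geq g(X_p)$. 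For $X = Z$, using $Z \cap \{x_0, \ldots, x_p\} = \emptyset$ and $\beta \notin \pi_l^p(Z \cap T_l)$, the recoloring at $x_{p+1}$ adds $\beta$ while losing at most one instance of $\alpha_{l+1}$, so $Z$ stays satisfying.

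For $X \neq X_p, Z$ with $x_{p+1} \in X$, I assume for contradiction that $X$ is $\pi_l^{p+1}$-non-satisfying. Then $X$ must be $\pi_l^p$-tight with $\alpha_{l+1} \notin \pi_l^p((X \setminus \{x_{p+1}\}) \cap T_l)$ and $\beta \in \pi_l^p((X \setminus \{x_{p+1}\}) \cap T_l)$, which rules out $X \subseteq Z$. A preparatory observation is that $X_p$ is $\pi_l$-tight (the deficit $f_{\pi_l}(X_p) - f_{\pi_l^p}(X_p)$ equals exactly one, since $\alpha_{l+1}$ persists at $x_{p+1}$ and only $\beta$ can drop) and $X_p \not\supseteq Z$ (else $X_p \in \mathcal{F}_l$ strictly above $Z$, violating Claim~\ref{clm:maximal_in_f_i}). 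The strong triple-intersecting supermodularity of $g$ applied to the distinct sets $Z, X_p, X$ (with $x_{p+1}$ in their common intersection) provides two supermodular pairs from $\{(Z, X_p), (X_p, X), (X, Z)\}$, at least one of which involves $Z$. If $(Z, X_p)$ is supermodular, combining with $\pi_l$-tightness of $Z, X_p$ and Claim~\ref{clm:submo} yields $Z \cup X_p$ is $\pi_l$-tight, contradicting Claim~\ref{clm:maximal_in_f_i}. Otherwise $(X, Z)$ is supermodular; a color-set comparison shows $\pi_l(X \cap T_l) \subseteq \pi_l^p(X \cap T_l)$ using the bad profile (the sources of $\alpha_{l+1}$ and $\beta$ in $X \cap T_l$ are precisely controlled, with even-indexed $x_i$ and extra $\pi_l$-$\alpha_{l+1}$ elements forbidden), forcing $X$ to be $\pi_l$-tight, and the same submodular argument then yields $Z \cup X$ is $\pi_l$-tight, producing the same maximality contradiction.

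The main obstacle is this last sub-case, especially promoting $X$ from $\pi_l^p$-tight to $\pi_l$-tight when only $(X, Z)$ among the $Z$-containing pairs is supermodular. The careful color-set tracking is the heart of the argument: the bad profile precisely constrains which colors can appear in $X \cap T_l$ under $\pi_l$ versus $\pi_l^p$, forcing a containment of color sets that, combined with $\pi_l$-satisfyingness, collapses to equality and hence $\pi_l$-tightness.
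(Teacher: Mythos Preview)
Your argument is correct, but it takes a more circuitous route than the paper's. The paper's key simplification is to compare $\pi_l^{p+1}$ directly with $\pi_l$ rather than with $\pi_l^p$: since the full swap $\pi_l\to\pi_l^{p+1}$ only permutes the colors $\alpha_{l+1}$ and $\beta$ on $\{x_0,\ldots,x_{p+1}\}$, any set can lose at most one of these two colors, so every $\pi_l^{p+1}$-non-satisfying $X$ is immediately $\pi_l$-tight. With $Z$, $X_p$, and $X$ all $\pi_l$-tight and sharing $x_{p+1}$, a single invocation of Claim~\ref{clm:tight} (with $\pi=\pi'=\pi_l$) gives a $\pi_l$-tight $Z\cup X$ or $Z\cup X_p$ in $\mathcal{F}$, contradicting the maximality of $Z$ in $\mathcal{F}_l$. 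Your approach instead first establishes $\pi_l^p$-tightness of $X$ via the bad profile, and only upgrades to $\pi_l$-tightness inside the ``$(X,Z)$ supermodular'' branch through the color-set containment $\pi_l(X\cap T_l)\subseteq \pi_l^p(X\cap T_l)$; this containment is indeed automatic once you note that both $\alpha_{l+1}$ and $\beta$ lie in $\pi_l^p(X\cap T_l)$, but the detour through $\pi_l^p$ and the explicit case split on which supermodular pair appears is unnecessary. One small slip: in your preparatory observation you prove $X_p\not\supseteq Z$, whereas the ``$(Z,X_p)$ supermodular'' branch actually needs $X_p\not\subseteq Z$ (so that $Z\cup X_p\supsetneq Z$); this is immediate from $x_p\in X_p\setminus Z$, so the gap is cosmetic.
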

\begin{proof}
Suppose for contradiction that some $X\in \mathcal{F}$ is not $\pi_l^{p+1}$-satisfying. Then $X$ is $\pi_l$-tight because $X$ is $\pi_l$-satisfying. If $x_{p+1}\notin X$, then $X$ is not $\pi_l^p$-satisfying because $X$ is is not $\pi_l^{p+1}$-satisfying, which implies that $X=X_p$, contradicting $x_{p+1}\notin X$. Otherwise, we have $x_{p+1}\in X$. Then $x_{p+1}\in X\cap X_p\cap Z$. If $X\subseteq Z$, then $X$ is $\pi_l^{p}$-satisfying and we have $\beta\notin \pi_l^p(X\cap T_l)$, which implies that $X$ is $\pi_l^{p+1}$-satisfying, a contradiction. Otherwise, we have $X\not\subseteq Z$. Since $\beta\in \pi_l(X_p\cap T_l)$, we have $X_p\not\subseteq Z$. If $X=X_p$, then we have $x_p,x_{p+1}\in X$, which contradicts that $X$ is not $\pi_l^{p+1}$-satisfying. Otherwise, we have $X\neq X_p$. By these facts, $X,X_p,Z$ are distinct $\pi_l$-tight sets. Hence, by Claim \ref{clm:tight} one of $Z\cup X$ and $Z\cup X_p$ is a $\pi_l$-tight set in $\mathcal{F}$, contradicting the maximality of $Z$ in $\mathcal{F}_l$.
\end{proof}
For each $q+1\leq i\leq l-1$, let $\pi_i^{p+1}:T_i\rightarrow [k]$ be an assignment of colors defined below:
\begin{align}
    \pi_i^{p+1}(t)=\left\{
    \begin{array}{lll}
    \alpha_{l+1} & (t=x_j\ \mathrm{for\ even\ }0\leq j\leq p+1),\\
    \beta & (t=x_j\ \mathrm{for\ odd\ }0\leq j\leq p+1),\\
    \pi_i(t) & (\mathrm{otherwise}).
    \end{array}
    \right.\notag
    \end{align}
Let $\pi_q^{p+1}:T\rightarrow [k]$ be an assignment of colors defined below:
\begin{align}
    \pi_q^{p+1}(t)=\left\{
    \begin{array}{lll}
    \alpha_{q+1}=\alpha_{l+1} & (t=u_{q+1}),\\
    \pi_{q+1}^{p+1}(t) & (\mathrm{otherwise}).
    \end{array}
    \right.\notag
    \end{align}
The following fact completes the proof.
\begin{clm}
\label{clm:final}
Every $X\in \mathcal{F}$ is $\pi_i^{p+1}$-satisfying for each $q\leq i\leq l$.
\end{clm}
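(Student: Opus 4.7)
The plan is to prove the claim by downward induction on $i$, from $i = l$ to $i = q$. The base case $i = l$ is exactly Claim \ref{clm:pilp1-sat}. For the inductive step, I would assume every $X \in \mathcal{F}$ is $\pi_{i+1}^{p+1}$-satisfying (for some $q \leq i \leq l-1$) and deduce the same for $\pi_i^{p+1}$. The two assignments differ in a controlled way: in the general step $q+1 \leq i \leq l-1$, $\pi_{i+1}^{p+1}$ has $u_i$ in its domain with color $\alpha_{i+1}$ but excludes $u_{i+1}$, while $\pi_i^{p+1}$ has $u_{i+1}$ in its domain with color $\alpha_{i+1}$ but excludes $u_i$; on the bicolor-chain elements $x_j$ and elsewhere they coincide. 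For the final step $i = q$, $\pi_q^{p+1}$ simply extends the domain of $\pi_{q+1}^{p+1}$ by adding $u_{q+1}$ with color $\alpha_{q+1} = \alpha_{l+1}$.

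For each $X \in \mathcal{F}$ I case-split on the membership of the two differing elements. When both $u_i, u_{i+1}$ lie in $X$ or both outside $X$ (and analogously for $u_{q+1}$ alone in the final step), a direct calculation gives $f_{\pi_i^{p+1}}(X) = f_{\pi_{i+1}^{p+1}}(X)$. When $u_i \in X$ and $u_{i+1} \notin X$, the identity $|X \setminus T_i| = |X \setminus T_{i+1}| + 1$ absorbs any loss of one color, so $f_{\pi_i^{p+1}}(X) \geq f_{\pi_{i+1}^{p+1}}(X) \geq g(X)$. The delicate case is when $u_i \notin X$, $u_{i+1} \in X$ (or $u_{q+1} \in X$ for the final step), $X$ is $\pi_{i+1}^{p+1}$-tight, and the color $\alpha_{i+1}$ (respectively $\alpha_{q+1}$) already appears in $\pi_{i+1}^{p+1}(X \cap T_{i+1})$ via an element other than $u_i$; here $f_{\pi_i^{p+1}}(X)$ drops by one.

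To handle this delicate case I expect to adapt the argument used in Claim \ref{clm:pi_i-sat}. If such a tight $X$ existed, Claim \ref{clm:tight} applied to $X$ together with $Z$ and $Y_{i+1}$ (which remain tight and maximal at level $\pi_{i+1}^{p+1}$ by the same bookkeeping as in Claims \ref{clm:z_in_f} and \ref{clm:maximal_in_f_i}) would force either $X \subseteq Y_{i+1}$ or the creation of a tight set strictly larger than $Z$ in the relevant family. The former case contradicts $\alpha_{i+1} \notin \pi_0(Y_{i+1} \cap T_0)$ combined with Claim \ref{clm:y_i=y_j} (so that no duplicate of $\alpha_{i+1}$ can arise from the $u_j$-shifts inside $Y_{i+1}$), while the latter contradicts the maximality of $Z$ in $\mathcal{F}_{i+1}$. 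The main obstacle will be carefully tracking how the bicolor-chain swap interacts with the $u_j$-shifts: since the color $\alpha_{l+1}$ also appears at $u_q = x_{p+1}$, one must verify that after both modifications the maximal tight sets still coincide with $Z$ and the appropriate $Y_j$, so that Claim \ref{clm:tight} applies cleanly. Once the induction reaches $i = q$, the assignment $\pi_q^{p+1}$ has domain $T = T_0 \cup \{u_0\}$ and satisfies every $X \in \mathcal{F}$, contradicting the maximality of $T_0$ and completing the proof of Theorem \ref{thm:vizing_supermodular}.
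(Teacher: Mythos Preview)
Your overall architecture (downward induction from $i=l$ to $i=q$, base case via Claim~\ref{clm:pilp1-sat}, and isolating the single ``delicate'' configuration $u_i\notin X$, $u_{i+1}\in X$, $X$ tight) matches the paper. The gap is in how you propose to dispatch the delicate case.

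You plan to apply Claim~\ref{clm:tight} to the triple $X,Z,Y_{i+1}$ with respect to the \emph{post-swap} assignment $\pi_{i+1}^{p+1}$, relying on ``$Z$ and $Y_{i+1}$ remain tight and maximal at level $\pi_{i+1}^{p+1}$.'' This assertion is not justified and in general fails: the bicolor swap recolors the elements $x_0,\dots,x_{p+1}$, and nothing prevents some $x_s$ from lying in $Y_{i+1}$, which can destroy $\pi_{i+1}^{p+1}$-tightness of $Y_{i+1}$. Even if you could force $X\subseteq Y_{i+1}$, your intended contradiction ``$\alpha_{i+1}\notin \pi_0(Y_{i+1}\cap T_0)$'' only controls the \emph{pre-swap} colors; it does not rule out that the swap itself introduced the color $\alpha_{i+1}$ into $X$ (precisely when $\alpha_{i+1}\in\{\alpha_{l+1},\beta\}$ and some $x_s\in X$). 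You acknowledge this interaction as ``the main obstacle'' but do not resolve it, and the triple $X,Z,Y_{i+1}$ alone cannot resolve it.

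The paper handles this case differently. It does \emph{not} work with $\pi_{i}^{p+1}$-tightness of $Y_i$ or $Z$; instead it shows the offending $X$ must already be $\pi_{i-1}$-tight (for the pre-swap assignment), then applies Claim~\ref{clm:tight} to the triple $Z,Y_i,X$ at the original levels to conclude $X=Y_i$. The remaining situation, where $\alpha_i\in\{\alpha_{l+1},\beta\}$ was introduced into $Y_i$ by the swap via some $x_s\in Y_i$, is eliminated by a separate case analysis ($s=0$ versus $1\le s\le p$) that brings in the bicolor-chain sets $X_{s-1},X_s$ and applies Claim~\ref{clm:tight} to triples such as $Y_l,X_0,Y_i$ or $Y_i,X_s,X_{s-1}$. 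Your outline never invokes the $X_s$'s, and without them the delicate case cannot be closed.
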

\begin{proof}
We show this by induction on $i$. The case of $i=l$ follows from Claim \ref{clm:pilp1-sat}. Suppose that every $X\in \mathcal{F}$ is $\pi_i^{p+1}$-satisfying for some $q+1\leq i\leq l$. It suffices to show that every $X\in \mathcal{F}$ is $\pi_{i-1}^{p+1}$-satisfying. Suppose for contradiction that some $X\in \mathcal{F}$ is not $\pi_{i-1}^{p+1}$-satisfying. Since $X$ is $\pi_i^{p+1}$-satisfying, we have $u_i\in X$. We now show that $X$ is $\pi_{i-1}$-tight. We consider two cases $q+2\leq i\leq l$ and $i=q+1$ separately.
\begin{itemize}
    \item Consider the case when $q+2\leq i\leq l$. In this case, since $X$ is $\pi_{i-1}$-satisfying and not $\pi_{i-1}^{p+1}$-satisfying, $X$ is $\pi_{i-1}$-tight.
    \item Consider the case when $i=q+1$. If $u_q\notin X$, then since $X$ is $\pi_{q}$-satisfying and not $\pi_{q}^{p+1}$-satisfying, $X$ is $\pi_{q}$-tight. If $u_q\in X$, then since $\pi_q^{p+1}(u_q)=\beta,\pi_q^{p+1}(u_{q+1})=\alpha_{l+1}$, we have $\beta,\alpha_{l+1}\in \pi_q^{p+1}(X\cap T)$. Since $X$ is $\pi_{q}$-satisfying and not $\pi_{q}^{p+1}$-satisfying, this implies that $X$ is $\pi_q$-tight.
\end{itemize}
If $X\subseteq Z$, then since $X$ is $\pi_{i-1}$-satisfying, $X$ is also $\pi_{i-1}^{p+1}$-satisfying because $x_0,\ldots,x_p\notin Z\supseteq X$ and $\pi_{i-1}^{p+1}(x_{p+1})=\beta\notin \pi_{i-1}(Z\cap T_{i-1})\supseteq \pi_{i-1}(X\cap T_{i-1})$, a contradiction. Hence, we may assume that $X\not\subseteq Z$. If $X\neq Y_i$, then since  $Z,Y_i,X$ are distinct sets satisfying $u_i\in Z\cap Y_i\cap X$, by Claim \ref{clm:tight} $Z\cup Y_i$ is a $\pi_i$-tight set in $\mathcal{F}$, or $Z\cup X$ is a $\pi_{i-1}$-tight set in $\mathcal{F}$, contradicting the maximality of $Z$. Hence, we may assume that $X=Y_i$. If $u_{i-1}\in Y_i$, then by Claim \ref{clm:y_i=y_j} we have $Y_{i-1}=Y_{i}$, which implies that $\alpha_{i}\notin \pi_{i-1}(Y_{i-1}\cap T_{i-1})=\pi_{i-1}(Y_i\cap T_{i-1})$, contradicting $\pi_{i-1}(u_i)=\alpha_i$. Hence, we may assume that $u_{i-1}\notin Y_i$. Since $Y_i$ is $\pi_i$-tight and $\pi_{i-1}$-tight, this implies that $\alpha_{i}\notin \pi_i(Y_i\cap T_i)$. Moreover, since $Y_i$ is $\pi_i^{p+1}$-satisfying and not $\pi_{i-1}^{p+1}$-satisfying, and since $u_{i-1}\notin Y_i$ and $u_{i}\in Y_i$, we have $\alpha_{i}\in \pi_i^{p+1}(Y_i\cap T_i)$. Since $\alpha_{i}\notin \pi_i(Y_i\cap T_i)$ and $\alpha_{i}\in \pi_i^{p+1}(Y_i\cap T_i)$, we have $\alpha_i \in \{\alpha_{l+1},\beta\}$. Since $\alpha_i \in \pi_i(Z\cap T_i)$ and $\beta\notin \pi_i(Z\cap T_i)$, this implies that $\alpha_i=\alpha_{l+1}$ and $x_s\in Y_i$ for some $0\leq s\leq p$. If $i=l$, then we have $\alpha_{l+1}=\alpha_{l}\in \pi_{l-1}(Y_l\cap T_{l-1})$, contradicting the definition of $\alpha_{l+1}$. Assume that $q+1\leq i\leq l-1$. We consider two cases $s=0$ and $1\leq s\leq p$ separately.

Consider the case when $s=0$. Since $\alpha_{l+1}\in \pi_{i-1}(Y_i\cap T_{i-1})$ and $\alpha_{l+1}\notin \pi_{i-1}(Y_l\cap T_{i-1})$, we have $Y_i\neq Y_l$. If $Y_l\subseteq Y_i$, then we have $u_l\in Z\cap Y_l\cap Y_i$, which implies by Claim \ref{clm:tight} that $Z\cup Y_l$ is a $\pi_l$-tight set in $\mathcal{F}$, or $Z\cup Y_i$ is a $\pi_i$-tight set in $\mathcal{F}$, contradicting the maximality of $Z$. Hence, we may assume that $Y_l\not\subseteq Y_i$. Since $\alpha_{l+1}\in \pi_l(X_0\cap T_l)$ and $\alpha_{l+1}\notin \pi_l(Y_l\cap T_l)$, we have $X_0\not\subseteq Y_l$. Similarly, since $\alpha_{l+1}\in \pi_i(X_0\cap T_i)$ and $\alpha_{l+1}=\alpha_i\notin \pi_i(Y_i\cap T_i)$, we have $X_0\not\subseteq Y_i$. We now show that $Y_l$ is $\pi_i$-tight. For each $0\leq j<l$ with $u_j\in Y_l$, we have $Y_j=Y_l$ by Claim \ref{clm:y_i=y_j}, which implies that $\alpha_{j+1}\notin \pi_j(Y_l\cap T_j)$. Hence, we have $f_{\pi_l}(Y_l)\geq f_{\pi_i}(Y_l)$. Since $Y_l$ is $\pi_l$-tight, this implies that $Y_l$ is also $\pi_i$-tight. Then, $Y_l,X_0,Y_i$ are distinct sets satisfying $x_0\in Y_l\cap X_0\cap Y_i$, which implies by Claim \ref{clm:tight} that $Y_l\cup X_0$ is a $\pi_l$-tight set in $\mathcal{F}$, or $Y_l\cup Y_i$ is a $\pi_i$-tight set in $\mathcal{F}$, contradicting the maximality of $Y_l$ in $\mathcal{F}_l$, or that of $Y_i$ in $\mathcal{F}_i$.

Consider the case when $1\leq s\leq p$. If $X_s=X_{s-1}$, then we have $\alpha_{l+1},\beta\in \pi_l^{s-1}(X_s\cap T_l)=\pi_l^{s-1}(X_{s-1}\cap T_l)$, which contradicts that $X_{s-1}$ is not $\pi_l^{s-1}$-satisfying. Hence, we may assume that $X_s\neq X_{s-1}$. Since $\alpha_{l+1}\in \pi_i(X_s\cap T_i)\cap \pi_i(X_{s-1}\cap T_i)$ and $\alpha_{l+1}\notin \pi_i(Y_i\cap T_i)$, we have $X_s,X_{s-1}\not\subseteq Y_i$. If $X_s$ and $X_{s-1}$ are $\pi_i$-tight, then since $x_s\in Y_i\cap X_s\cap X_{s-1}$, by Claim \ref{clm:tight} one of $Y_i\cup X_s$ and $Y_i\cup X_{s-1}$ is a $\pi_i$-tight set in $\mathcal{F}$, contradicting the maximality of $Y_i$. Otherwise, at least one of $X_s$ and $X_{s-1}$ is not $\pi_i$-tight. Assume that $X_m$ is not $\pi_i$-tight for some $m\in \{s,s-1\}$. Since $X_m$ is $\pi_l$-tight and not $\pi_i$-tight, $u_r\in X_m$ holds for some $i\leq r\leq l-1$. Since $\beta\in \pi_l(X_m\cap T_l)$ and $\beta\notin \pi_l(Z\cap T_l)$, we have $X_m\not\subseteq Z$. If $X_m\neq Y_r$, then since $u_r\in Z\cap X_m\cap Y_r$, by Claim \ref{clm:tight} $Z\cup X_m$ is a $\pi_l$-tight set in $\mathcal{F}$, or $Z\cup Y_r$ is a $\pi_r$-tight set in $\mathcal{F}$, contradicting the maximality of $Z$. Hence, we have $X_m=Y_r$. Here, we have $f_{\pi_r}(Y_r)\geq f_{\pi_i}(Y_r)$, which implies that $Y_r$ is $\pi_i$-tight because $Y_r$ is $\pi_r$-tight. However, this contradicts that $X_m$ is not $\pi_i$-tight.
\end{proof}
By Claim \ref{clm:final}, every $X\in \mathcal{F}$ is $\pi_q^{p+1}$-satisfying, which contradicts the maximality of $T_0$.
\end{proof}

\section{Implication of the supermodular extension}
\label{sec:impl}
In this section, we prove that Theorem \ref{thm:vizing_supermodular} includes Theorems \ref{new-edge-coloringthm} and \ref{cor:supermo} as special cases. To show that Theorem \ref{thm:vizing_supermodular} includes Theorem \ref{new-edge-coloringthm}, suppose that $c(v)\leq \min\{\mathrm{deg}(v),k\}$ holds for every $v\in V$, and $S=\{v\in V\mid c(v)+\mu(v)>k\}$ is a stable set. Let $\mathcal{F}=\{\delta(v)\mid v\in V\} \subseteq 2^E$. Then, $\mathcal{F}$ is an intersecting 2/3-laminar family because $\delta(v_1)\cap \delta(v_2)\cap \delta(v_3)=\emptyset$ for every distinct vertices $v_1,v_2,v_3\in V$. Let $g:\mathcal{F}\rightarrow \mathbf{Z}$ be a function defined as follows:
\begin{align}
    g(X)=\max\{c(v)\mid v\in V,\ X=\delta(v)\}\ \ (X\in \mathcal{F}).\notag
\end{align}
Since $\delta(v_1)\cap \delta(v_2)\cap \delta(v_3)=\emptyset$ for every distinct vertices $v_1,v_2,v_3\in V$, $g$ is an intersecting 2/3-supermodular function. Let $\mathcal{L}=\{X\in \mathcal{F}\mid g(X)+D_{\mathcal{F}}(X)>k\}$.
Then $\mathcal{L}$ is a $g$-laminar family because we have $c(v)+\mu(v)\geq g(\delta(v))+D_{\mathcal{F}}(\delta(v))$ for every $v\in V$ with $g(\delta(v))=c(v)$, which implies that $S_{\mathcal{L}}=\{v\in V\mid \delta(v)\in \mathcal{L},\ g(\delta(v))=c(v)\}$ satisfies $S_{\mathcal{L}}\subseteq S$, and hence $S_{\mathcal{L}}$ is a stable set, which concludes that every distinct $X,Y\in \mathcal{L}$ satisfy $X\cap Y=\emptyset$. We also have
\begin{align}
    \min\{|\delta(v)|,k\}=\min\{\mathrm{deg}(v),k\}\geq c(v)=g(\delta(v))\notag
\end{align}
for every $v\in V$ with $g(\delta(v))=c(v)$. Therefore, by Theorem \ref{thm:vizing_supermodular}, there exists an assignment of colors $\pi:E\rightarrow [k]$ such that $|\pi(\delta(v))|\geq g(\delta(v))\geq c(v)$ holds for every $v\in V$, which implies Theorem \ref{new-edge-coloringthm}.

Theorem \ref{cor:supermo} is also a special case of Theorem \ref{thm:vizing_supermodular} as follows. Let $\mathcal{F}$ be an intersecting family, and $g:\mathcal{F}\rightarrow \mathbf{Z}$ an intersecting supermodular function. Then $\mathcal{F}$ is an intersecting 2/3-laminar family, and $g$ is an intersecting 2/3-supermodular function. Moreover, $\mathcal{L}=\{X\in \mathcal{F}\mid g(X)+D_{\mathcal{F}}(X)>k\}$ is a $g$-laminar family because if $X,Y\in \mathcal{L}$ satisfy $X\cap Y\neq \emptyset$, then we have $X\cup Y,X\cap Y\in \mathcal{F}$ and $g(X)+g(Y)\leq g(X\cup Y)+g(X\cap Y)$. Hence, by Theorem \ref{thm:vizing_supermodular}, if $\min\{|X|,k\}\geq g(X)$ holds for every $X\in \mathcal{F}$, then there exists an assignment of colors $\pi:U\rightarrow [k]$ such that $|\pi(X)|\geq g(X)$ holds for every $X\in \mathcal{F}$, which implies Theorem \ref{cor:supermo}.

\section{Polynomial time algorithm}
\label{sec:polyalgo}
In this section, we prove Theorem \ref{thm:polytime} from the constructive proof of Theorem \ref{thm:vizing_supermodular} in Section \ref{chap:supermodular_extension} with the aid of Theorem \ref{thm:min23}. To construct a coloring of Theorem \ref{thm:vizing_supermodular} with $\mathcal{F}=2^U$, we start with the empty coloring $\pi:\emptyset\rightarrow [k]$. Suppose that $\pi_0:T_0\rightarrow [k]$ is a current coloring such that 
\begin{align}
\label{eq:coloring_condition}
|X\setminus T_0|+|\pi_0(X\cap T_0)|\geq g(X)
\end{align}
holds for every $X\subseteq U$, where $T_0$ is a subset of $U$. Then, as in the proof of Theorem \ref{thm:vizing_supermodular}, we update the coloring $\pi_0$ to another coloring  $\pi:T\rightarrow [k]$ satisfying (\ref{eq:coloring_condition}) for every $X\subseteq U$, where $T=T_0\cup \{u_0\}$ for some $u_0\in U\setminus T_0$. By repeating this procedure of updates, we finally obtain a coloring of $U$ satisfying (\ref{eq:coloring_condition}) for every $X\subseteq U$, which is a desired coloring in Theorem \ref{thm:vizing_supermodular}. Hence, to prove Theorem \ref{thm:polytime}, it suffices to show that the update in the proof of Theorem \ref{thm:vizing_supermodular} can be done in polynomial time. 

The update starts with taking some element $u_0\in U\setminus T_0$. For each element $u\in U\setminus \{u_0\}$, let $\mathcal{F}_{u,u_0}=\{X\subseteq U\mid u,u_0\in X\}$. To obtain maximal sets in $\mathcal{F}_0$, we compute a minimizer of $f_{u,u_0}:\mathcal{F}_{u,u_0}\rightarrow \mathbf{Z}$ defined as follows for each $u\ne u_0$:
\begin{align}
f_{u,u_0}(X)=M(|X\setminus T_0|+|\pi_0(X\cap T_0)|-g(X))-|X|\ \ \ (X\in \mathcal{F}_{u,u_0}),\notag
\end{align}
where $M$ is an integer with $M>|U|$. If $\mathcal{F}_{u,u_0}\cap \mathcal{F}_0\ne \emptyset$, then the minimum value of $f_{u,u_0}$ is negative, and $X\in \mathcal{F}_{u,u_0}\cap \mathcal{F}_0$ with maximum cardinality minimizes $f_{u,u_0}$. If $\mathcal{F}_{u,u_0}\cap \mathcal{F}_0= \emptyset$, then the minimum value of $f_{u,u_0}$ is positive.
Recall that the number of maximal sets in $\mathcal{F}_0$ is at most two as shown in the proof of Theorem \ref{thm:vizing_supermodular}. If $X\ne \{u_0\}$ is the unique maximal set in $\mathcal{F}_0$, then $X$ is the unique minimizer of $f_{u,u_0}$ for each $u\in X\setminus \{u_0\}$. If $X_1$ and $X_2$ are the maximal sets in $\mathcal{F}_0$, then $X_i$ is the unique minimizer of $f_{u_i,u_0}$ for $i=1,2,$ where $u_1\in X_1\setminus X_2$ and $u_2\in X_2\setminus X_1$. Hence, we can compute all the maximal sets in $\mathcal{F}_0$ by minimizing $f_{u,u_0}$ for each $u\ne u_0$. Since $f_{\pi_0}(X)=|X\setminus T_0|+|\pi_0(X\cap T_0)|$ satisfies the submodular inequality by Claim \ref{clm:submo}, $f_{u,u_0}$ is 2/3-submodular if we regard $\mathcal{F}_{u,u_0}$ as $2^{U\setminus \{u,u_0\}}$. Therefore, a minimizer of $f_{u,u_0}$ can be obtained in polynomial time by Theorem \ref{thm:min23}. The update next constructs a maximal sequence $\{(Y_0,u_0),\ldots,(Y_l,u_l)\}$. For this, we need to compute a maximal set $Y_i$ in $\mathcal{F}_i$ distinct from $Z$ for each $i=1,\ldots,l$. This can be done in polynomial time by a similar way as the case when there are two maximal sets in $\mathcal{F}_0$. Similarly, we also need to obtain a maximal set in $\mathcal{F}_{l+1}$ distinct from $Z$, which can be done in polynomial time by the same way. After that, the update proceeds to construct a maximal sequence $\{x_0,\ldots,x_p\}$. To
obtain this, we need to find a set $X_i\subseteq U$ that is not $\pi_l^i$-satisfying for each $i=0,\ldots,p-1$, and verify that $X_i$ is the unique one. Since the number of sets that are not $\pi_l^p$-satisfying is at most one by Claim \ref{clm:notsat1}, if we find a set $X_i$ that is not $\pi_l^i$-satisfying, then we can see that $X_i$ is the unique one. Hence, the verification part is unnecessary. Let $f_i:2^U\rightarrow \mathbf{Z}$ be a set function defined as follows for each $i=0,\ldots,p-1$:
\begin{align}
f_i(X)=|X\setminus T_l|+|\pi_l^i(X\cap T_l)|-g(X)\ \ \ (X\subseteq U).\notag
\end{align}
If there exists a set $X_i\subseteq U$ that is not $\pi_l^i$-satisfying, then the minimum value of $f_i$ is negative, and $X_i$ is the unique minimizer of $f_i$. Otherwise, the minimum value of $f_i$ is nonnegative. Hence, we can compute the unique set $X_i\subseteq U$ that is not $\pi_l^i$-satisfying by minimizing $f_i$ for each $i=0,\ldots,p-1$. Let $\mathcal{F}_u=\{X\subseteq U\mid u\in X\}$, and let $f_{i,u}$ be the restriction of $f_i$ to $\mathcal{F}_u$ for each $u\in U$. Then $f_{i,u}$ is 2/3-submodular if we regard $\mathcal{F}_u$ as $2^{U\setminus \{u\}}$. Hence, we can obtain a minimizer of $f_i$ in polynomial time by minimizing $f_{i,u}$ for each $u\in U$. In addition, we also need to compute the unique set $X_p\subseteq U$ that is not $\pi_l^p$-satisfying. This can be done in polynomial time by a similar way. The other parts of the update can easily be done in polynomial time.

\section*{Acknowledgments}
The author is grateful to Krist\'{o}f B{\'{e}}rczi and Tam\'{a}s Schwarcz for helpful discussions on 2/3-submodular functions. This work was supported by Grant-in-Aid for JSPS Fellows Grant Number JP23KJ0379 and JST SPRING Grant Number JPMJSP2108.

\bibliography{main}
\bibliographystyle{plain}
\end{document}